\documentclass{amsart}
\usepackage{amsfonts}

\setcounter{MaxMatrixCols}{10}

\newtheorem{theorem}{Theorem}[section]

\newtheorem{lemma}[theorem]{Lemma}

\newtheorem{proposition}[theorem]{Proposition}
\newtheorem{remark}[theorem]{Remark}
\newtheorem{comments }[theorem]{Comments}
\numberwithin{theorem}{section}
\input{tcilatex}

\begin{document}
\title[Crossed products by Hilbert pro-$C^{\ast }$-bimodules versus tensor
products]{Crossed products by Hilbert pro-$C^{\ast }$-bimodules versus
tensor products }
\author{Maria Joi\c{t}a}
\address{Maria Joi\c{t}a \\
Department of Mathematics, Faculty of Applied Sciences, University
Politehnica of Bucharest, 313 Spl. Independentei, 060042, Bucharest, Romania
and Simion Stoilow Institute of Mathematics of the Roumanian Academy, 21
Calea Grivitei Street, 010702 Bucharest, Romania}
\email{mjoita@fmi.unibuc.ro }
\urladdr{http://sites.google.com/a/g.unibuc.ro/maria-joita/}
\subjclass[2000]{Primary 48L08; 48L05}
\keywords{pro-$C^{\ast }$-algebras, Hilbert pro-$C^{\ast }$-bimodules,
multipliers, crossed products}
\thanks{This paper is in final form and no version of it will be submitted
for publication elsewhere.}

\begin{abstract}
We show that if $(X.A)$ and $(Y,B)$ are two isomorphic Hilbert pro-$C^{\ast
} $-bimodules, then the crossed product $A\times _{X}\mathbb{Z}$ of $A$ by $%
X $ and the crossed product $B\times _{Y}\mathbb{Z}$ of $B$ by $Y$ are
isomorphic as pro-$C^{\ast }$-algebras. We also prove a property of
"associativity" between " $\otimes _{\min }$" and "$\times _{X}$" $\ $as
well as " $\otimes _{\max }$" and "$\times _{X}$". As an application of
these results we show that the crossed product of a nuclear pro-$C^{\ast }$%
-algebra $A$ by a full Hilbert pro-$C^{\ast }$-bimodule $X$ is a nuclear pro-%
$C^{\ast }$-algebra.
\end{abstract}

\maketitle

\section{Introduction}

The crossed product $A\times _{\alpha }\mathbb{Z}$ of a $C^{\ast }$-algebra $%
A$ by an automorphism $\alpha $ is isomorphic with the $C^{\ast }$-algebra
generated by the spectral subspaces $\left( A\times _{\alpha }\mathbb{Z}%
\right) _{0}$ and $\left( A\times _{\alpha }\mathbb{Z}\right) _{1}$ of the
dual action of the unit circle on $A\times _{\alpha }\mathbb{Z}$ induced by $%
\alpha $. A natural question is when given a $C^{\ast }$-algebra $B$ and an
action of the unit circle on $B$, there is a $C^{\ast }$-algebra $B_{0}$ and
an automorphism $\alpha _{0}$ of $B_{0}$ such that $B$ is isomorphic to the
crossed product of $B_{0}$ by $\alpha _{0}$. To answer to this question, B.
Abadie, S. Eilers and R. Exel \cite{AEE} introduced the notion of crossed
product by Hilbert $C^{\ast }$-bimodules that is a generalization of the
notion of crossed product by automorphisms. Here, the automorphism $\alpha $
of $A$ is replaced by a Hilbert $C^{\ast }$-bimodule $X$ over $A$. It is
known that the crossed product of $C^{\ast }$-algebras by automorphisms is
invariant under conjugacy, and the minimal tensor product $\left( A\times
_{\alpha }\mathbb{Z}\right) \otimes _{\min }B$ of the crossed product $%
A\times _{\alpha }\mathbb{Z}$ of $A$ by $\alpha $ is isomorphic to the
crossed product of $A\otimes _{\min }B$ by the automorphism $\alpha \otimes
_{\min }$id$_{B}$. It is natural to ask if these results can be extended in
the context of crossed products by Hilbert $C^{\ast }$-bimodules. Crossed
products of pro-$C^{\ast }$-algebras by inverse limit actions of locally
compact groups were first considered by N.C. Philips \cite{P1} and secondly
by M. Joi\c{t}a \cite{J4}. In \cite{JZ}, M. Joi\c{t}a and I. Zarakas
extended the construction of B. Abadie, S. Eilers and R. Exel in the context
of pro-$C^{\ast }$-algebras.

In this paper we prove that if two Hilbert (pro-)$C^{\ast }$-bimodules are
isomorphic, then the crossed products associated are isomorphic. Also we
show that given a Hilbert (pro-)$C^{\ast }$-bimodule $(X,A)\ $and a (pro-)$%
C^{\ast }$-algebra $B$, the crossed product associated to $(X\otimes _{\min
}B,A\otimes _{\min }B)$ is isomorphic to the minimal tensor product of the
crossed product associated to $(X,A)$ and $B$, and the crossed product
associated to $(X\otimes _{\max }B,A\otimes _{\max }B)$ is isomorphic to the
maximal tensor product of the crossed product associated to $(X,A)$ and $B$.
As an application of these results we show that the crossed product $A\times
_{X}\mathbb{Z\ }$of a nuclear pro-$C^{\ast }$-algebra $A$ by a full Hilbert
pro-$C^{\ast }$-bimodule $X$ is a nuclear pro-$C^{\ast }$-algebra.

\section{Preliminaries}

A \textit{pro-}$C^{\ast }$\textit{-algebra} is a complete Hausdorff
topological $\ast $-algebra $A$\ whose topology is given by a directed
family of $C^{\ast }$-seminorms $\{p_{\lambda };\lambda \in \Lambda \}$.

Throughout this paper all vector spaces are considered over the field $%
\mathbb{C}$ of complex numbers and all topological spaces are assumed to be
Hausdorff. Also the topology on the pro-$C^{\ast }$-algebras $A\ $and $B$ is
given by the family of $C^{\ast }$-seminorms $\Gamma =\{p_{\lambda };\lambda
\in \Lambda \}$, respectively $\Gamma ^{\prime }=\{q_{\delta };\delta \in
\Delta \}.$

A \textit{pro-}$C^{\ast }$\textit{-morphism} is a continuous $\ast $%
-morphism $\varphi :A\rightarrow B$ (that is, $\varphi $ is linear, $\varphi
\left( ab\right) =\varphi (a)\varphi (b)$ or all $a,b\in A,$ $\varphi
(a^{\ast })=\varphi (a)^{\ast }$ for all $a\in A$ and for each $q_{\delta
}\in \Gamma ^{\prime }$, there is $p_{\lambda }\in \Gamma $ such that $%
q_{\delta }\left( \varphi (a)\right) \leq p_{\lambda }\left( a\right) $ for
all $a\in A$). An invertible pro-$C^{\ast }$-morphism $\varphi :A\rightarrow
B$ is a \textit{pro-}$C^{\ast }$\textit{-isomorphism} if $\varphi ^{-1}$ is
also pro-$C^{\ast }$-morphism.

For $\lambda \in \Lambda $,\ $\ker p_{\lambda }$\ is a closed $\ast $%
-bilateral ideal and $A_{\lambda }=A/\ker p_{\lambda }$\ is a $C^{\ast }$%
-algebra in the $C^{\ast }$-norm $\left\Vert \cdot \right\Vert _{p_{\lambda
}}$\ induced by $p_{\lambda }$\ (that is, $\left\Vert a+\ker p_{{\small %
\lambda }}\right\Vert _{p_{\lambda }}=$\ $p_{{\small \lambda }}(a),$ for all 
$a\in A$). The canonical map from $A$ to $A_{\lambda }$ is denoted by $\pi
_{\lambda }^{A},$ $\pi _{\lambda }^{A}\left( a\right) =a+\ker p_{\lambda }$
for all $a\in A$. For $\lambda ,\mu \in \Lambda $\ with $\mu \leq \lambda $\
there is a surjective $C^{\ast }$-morphism $\pi _{\lambda \mu
}^{A}:A_{\lambda }\rightarrow A_{\mu }$\ such that $\pi _{\lambda \mu
}^{A}\left( a+\ker {\small p}_{\lambda }\right) =a+\ker p_{\mu }$, and then $%
\{A_{\lambda };\pi _{\lambda \mu }^{A}\}_{\lambda ,\mu \in \Lambda }$\ is an
inverse system of $C^{\ast }$-algebras. Moreover, pro-$C^{\ast }$-algebras$\
A$ and $\lim\limits_{\leftarrow \lambda }A_{\lambda }$ are isomorphic (the
Arens-Michael decomposition of $A$). For more details we refer the reader 
\cite{F,P}.

Here we recall some basic facts from \cite{J1} and \cite{Z} regarding
Hilbert pro-$C^{\ast }$-modules and Hilbert pro-$C^{\ast }$-bimodules
respectively.

\textit{A} \textit{right Hilbert pro-}$C^{\ast }$\textit{-module over }$A$%
\textit{\ }(or just \textit{Hilbert }$A$\textit{-module}), is a linear space 
$X$ that is also a right $A$-module equipped with a right $A$-valued inner
product $\left\langle \cdot ,\cdot \right\rangle _{A}$, that is $\mathbb{C}$%
- and $A$-linear in the second variable and conjugate linear in the first
variable, with the following properties:

\begin{enumerate}
\item $\left\langle x,x\right\rangle _{A}\geq 0$ and $\left\langle
x,x\right\rangle _{A}=0$ if and only if $x=0;$

\item $\left( \left\langle x,y\right\rangle _{A}\right) ^{\ast
}=\left\langle y,x\right\rangle _{A}$
\end{enumerate}

and which is complete with respect to the topology given by the family of
seminorms $\{p_{\lambda }^{A}\}_{\lambda \in \Lambda },$ with $p_{\lambda
}^{A}\left( x\right) =p_{\lambda }\left( \left\langle x,x\right\rangle
_{A}\right) ^{\frac{1}{2}},x\in X$. A Hilbert $A$-module is \textit{full} if
the pro-$C^{\ast }$- subalgebra of $A$ generated by $\{\left\langle
x,y\right\rangle _{A};x,y\in X\}$ coincides with $A$.

\textit{A} \textit{left Hilbert pro-}$C^{\ast }$\textit{-module }$X$\ over\
a\ pro-$C^{\ast }$-algebra $A$ is defined in the same way, where for
instance the completeness is requested with respect to the family of
seminorms $\{^{A}p_{\lambda }\}_{\lambda \in \Lambda }$, where $%
^{A}p_{\lambda }\left( x\right) =p_{\lambda }\left( _{A}\left\langle
x,x\right\rangle \right) ^{\frac{1}{2}},x\in X$.

In case $X$ is a left Hilbert \text{pro-}$C^{\ast }$\text{-}module over $A$
and a right Hilbert \text{pro-}$C^{\ast }$\text{-}module over $B$, the
topology on $B$ is given by the family of $C^{\ast }$-seminorms $%
\{q_{\lambda }\}_{\lambda \in \Lambda }$, such that the following relations
hold:

\begin{itemize}
\item $_{A}\left\langle x,y\right\rangle z=x\left\langle y,z\right\rangle
_{B}$ for all $x,y,z\in X$ ,

\item $q_{\lambda }^{B}(ax)$ $\leq p_{\lambda }(a)q_{\lambda }^{B}\left(
x\right) $ and $^{A}p_{\lambda }(xb)$ $\leq q_{\lambda }(b)^{A}p_{\lambda
}\left( x\right) $ for all $x\in X,\,a\in A,\,b\in B$ and for all $\lambda
\in \Lambda $,
\end{itemize}

then we say that $X$ is \textit{a Hilbert }$A-B$\textit{\ pro-}$C^{\ast }$%
\textit{-bimodule}.

A Hilbert $A-B$ pro-$C^{\ast }$\text{-}bimodule $X$ is \textit{full} if it
is full as a right and as a left Hilbert pro-$C^{\ast }$-module. Clearly,
any pro-$C^{\ast }$-algebra $A$ has a canonical structure of Hilbert $A-A$
bimodule with the inner products given by $_{A}\left\langle a,b\right\rangle 
$ $=ab^{\ast }$, respectively $\left\langle a,b\right\rangle _{A}$ $=a^{\ast
}b$ for all $a,b\in A.$

\textit{A morphism of Hilbert pro-}$C^{\ast }$\textit{-bimodules }from a
Hilbert $A-A$\ pro-$C^{\ast }$-bimodule $X$ to a Hilbert $B-B$\textit{\ }pro-%
$C^{\ast }$-bimodule $Y$ is a pair $\left( \Phi ,\varphi \right) $
consisting of a pro-$C^{\ast }$-morphism $\varphi :A\rightarrow B$ and a map 
$\Phi :X\rightarrow Y,\ $such that the following relations hold:

\begin{itemize}
\item $_{B}\left\langle \Phi \left( x\right) ,\Phi \left( y\right)
\right\rangle =\varphi \left( _{A}\left\langle x,y\right\rangle \right) $
and $\Phi \left( ax\right) =\varphi \left( a\right) \Phi \left( x\right) $
for all $x,y\in X,$ $a\in A,$

\item $\left\langle \Phi \left( x\right) ,\Phi \left( y\right) \right\rangle
_{B}=\varphi \left( \left\langle x,y\right\rangle _{A}\right) $ and $\Phi
\left( xa\right) =\Phi \left( x\right) \varphi \left( a\right) $ for all $%
x,y\in X,$ $a\in A.$
\end{itemize}

A morphism of Hilbert pro-$C^{\ast }$-bimodules\textit{\ }$\left( \Phi
,\varphi \right) :(X,A)\rightarrow (Y,B)$ is \textit{an isomorphism of
Hilbert pro-}$C^{\ast }$\textit{-bimodules} if\textit{\ }$\Phi $ and $%
\varphi $ are invertible and $\left( \Phi ^{-1},\varphi ^{-1}\right)
:(Y,B)\rightarrow (X,A)$ is a morphism of Hilbert pro-$C^{\ast }$-bimodules.

Let $X$ be a Hilbert $A-A$ pro-$C^{\ast }$-bimodule. Then, for each $\lambda
\in \Lambda ,$ $^{A}p_{\lambda }\left( x\right) =p_{\lambda }^{A}\left(
x\right) $ for all $x\in X$, and the normed space $X_{\lambda }=X/\ker
p_{\lambda }^{A}$, where $\ker p_{\lambda }^{A}=\{x\in X;p_{\lambda
}^{A}\left( x\right) =0\}$, is complete in the norm $||x+\ker p_{\lambda
}^{A}||_{X_{\lambda }}$ $=p_{\lambda }^{A}(x),x\in X$. Moreover, $X_{\lambda
}$ has a canonical structure of a Hilbert $A_{\lambda }-$ $A_{\lambda }$ $%
C^{\ast }$-bimodule with $\left\langle x+\ker p_{\lambda }^{A},y+\ker
p_{\lambda }^{A}\right\rangle _{A_{\lambda }}$ $=\left\langle
x,y\right\rangle _{A}+\ker p_{\lambda }$ and $_{A_{\lambda }}\left\langle
x+\ker p_{\lambda }^{A},y+\ker p_{\lambda }^{A}\right\rangle =\
_{A}\left\langle x,y\right\rangle +\ker p_{\lambda }$ for all $x,y\in X$.
The canonical surjection from $X$ to $X_{\lambda }$ is denoted by $\sigma
_{\lambda }^{X}$. For $\lambda ,\mu \in \Lambda $ with $\lambda \geq \mu $,
there is a canonical surjective linear map $\sigma _{\lambda \mu
}^{X}:X_{\lambda }\rightarrow X_{\mu }$ such that $\sigma _{\lambda \mu
}^{X}\left( x+\ker p_{\lambda }^{A}\right) =x+\ker p_{\mu }^{A}$ for all $%
x\in X$. For $\lambda ,\mu \in \Lambda $ with $\lambda \geq \mu ,$ $\left(
\sigma _{\lambda \mu }^{X},\pi _{\lambda \mu }^{A}\right) \ $is a morphism
of Hilbert $C^{\ast }$-bimodule and $\lim\limits_{\leftarrow \lambda
}X_{\lambda }$ has a canonical structure of Hilbert $\lim\limits_{\leftarrow
\lambda }A_{\lambda }-\lim\limits_{\leftarrow \lambda }A_{\lambda }$
bimodule. Moreover, $X=$ $\lim\limits_{\leftarrow \lambda }X_{\lambda }$, up
to an isomorphism of Hilbert pro-$C^{\ast }$- bimodules.

\textit{A covariant representation }of a Hilbert pro-$C^{\ast }$-bimodule $%
\left( X,A\right) $ on a pro-$C^{\ast }$-algebra $B$ is a morphism of
Hilbert pro-$C^{\ast }$-bimodules from $\left( X,A\right) $ to the Hilbert
pro-$C^{\ast }$-bimodule $\left( B,B\right) $.

\textit{The crossed product of }$A$\textit{\ }by\textit{\ }a Hilbert pro-$%
C^{\ast }$-bimodule $\left( X,A\right) $\textit{\ }is a pro-$C^{\ast }$%
-algebra, denoted by $A\times _{X}\mathbb{Z}$, and a covariant
representation $\left( i_{X},i_{A}\right) $ of $\left( X,A\right) $ on $%
A\times _{X}\mathbb{Z}$ with the property that for any covariant
representation $\left( \varphi _{X},\varphi _{A}\right) $ of $\left(
X,A\right) $ on a pro-$C^{\ast }$-algebra $B$, there is a unique pro-$%
C^{\ast }$-morphism $\Phi :A\times _{X}\mathbb{Z}\rightarrow B$ such that $%
\Phi \circ i_{X}=\varphi _{X}$ and $\Phi \circ i_{A}=\varphi _{A}$ \cite[%
Definition 3.3]{JZ}.

\begin{remark}
If $\left( \Phi ,\varphi \right) $ is a morphism of Hilbert pro-$C^{\ast }$%
-bimodules from $\left( X,A\right) $ to $\left( Y,B\right) $, then $\left(
i_{Y}\circ \Phi ,i_{B}\circ \varphi \right) $ is a covariant representation
of $\left( X,A\right) $ on $B\times _{Y}\mathbb{Z}$ and by the universal
property of $A\times _{X}\mathbb{Z}$ there is a unique pro-$C^{\ast }$%
-morphism $\Phi \times \varphi $ from $A\times _{X}\mathbb{Z}$ to $B\times
_{Y}\mathbb{Z}$ such that $\left( \Phi \times \varphi \right) \circ
i_{A}=i_{B}\circ \varphi $ and $\left( \Phi \times \varphi \right) \circ
i_{X}=i_{Y}\circ \Phi $.
\end{remark}

\section{Crossed products by Hilbert bimodules and tensor products}

An automorphism $\alpha $ of a pro-$C^{\ast }$-algebra $A$ such that $%
p_{\lambda }(\alpha (a))=p_{\lambda }(a)$ for all $a\in A$ and $\lambda \in
\Lambda ^{^{\prime }}$, where $\Lambda ^{^{\prime }}$ is a cofinal subset of 
$\Lambda $, is called an inverse limit automorphism. If $\alpha $ is an
inverse limit automorphism of the pro-$C^{\ast }$-algebra $A$, then $%
X_{\alpha }=\{\xi _{x};x\in A\}$ is a Hilbert $A-A$ pro-$C^{\ast }$-bimodule
with the bimodule structure defined as $\xi _{x}a=\xi _{xa}$, respectively $%
a\xi _{x}=\xi _{\alpha ^{-1}\left( a\right) x}$, and the inner products are
defined as $\left\langle \xi _{x},\xi _{y}\right\rangle _{A}=x^{\ast }y$,
respectively $_{A}\left\langle \xi _{x},\xi _{y}\right\rangle =\alpha \left(
xy^{\ast }\right) $.

Let $\alpha $ and $\beta $ be two inverse limit automorphisms of the pro-$%
C^{\ast }$-algebras $A$, respectively $B$. It is well known that if the pro-$%
C^{\ast }$-dynamical systems $\left( A,\alpha ,\mathbb{Z}\right) $ and $%
\left( B,\beta ,\mathbb{Z}\right) $ are conjugate (that is, there is a pro- $%
C^{\ast }$-morphism $\varphi :A\rightarrow B$ such that $\varphi \circ
\alpha =\beta \circ \varphi $), then the crossed product $A\times _{\alpha }%
\mathbb{Z}$ of $A$ by $\alpha $ and the crossed product $B\times _{\beta }%
\mathbb{Z}$ of $B$ by $\beta $ are isomorphic as pro-$C^{\ast }$-algebras
(see, for example, \cite{W,J4}). Proposition \ref{p1} extends this result
for crossed products by Hilbert pro-$C^{\ast }$-bimodules.

\begin{remark}
Suppose that $\alpha $ and $\beta $ are inverse limit automorphisms of the
pro-$C^{\ast }$-algebras $A$, respectively $B$ and $\varphi :A\rightarrow B$
is a pro-$C^{\ast }$-isomorphism such that $\varphi \circ \alpha =\beta
\circ \varphi $. Then the Hilbert pro-$C^{\ast }$-bimodules $(X_{\alpha },A)$
and $\left( X_{\beta },B\right) $ are isomorphic. Indeed, it is easy to
check that $\left( \varphi _{X_{\alpha }},\varphi _{A}\right) $, where $%
\varphi _{X_{\alpha }}\left( \xi _{x}\right) =\xi _{\varphi \left( x\right)
} $ for all $x\in A$ and $\varphi _{A}=\varphi $, is an isomorphism of
Hilbert pro-$C^{\ast }$-bimodules.
\end{remark}

To prove the following proposition we will use the following lemma.

\begin{lemma}
\label{Conjugate} \cite[Lemma 4.2]{JZ2} Let {$\left( \Phi ,\varphi \right) $
be a }morphism of Hilbert pro-$C^{\ast }$-bimodules from $\left( X,A\right) $
to $\left( Y,B\right) $. If $\Gamma $ and $\Gamma ^{\prime }$ have the same
index set, $\Lambda $, and $\varphi =\lim\limits_{\leftarrow \lambda
}\varphi _{\lambda }$, then $\Phi =\lim\limits_{\leftarrow \lambda }\Phi
_{\lambda },$ for each $\lambda \in \Lambda ,\left( \Phi _{\lambda },\varphi
_{\lambda }\right) $ is a morphism of Hilbert $C^{\ast }$-bimodules, $\left(
\Phi _{\lambda }\times \varphi _{\lambda }\right) _{\lambda }$ is an inverse
system of $C^{\ast }$-morphisms and $\Phi \times \varphi
=\lim\limits_{\leftarrow \lambda }\Phi _{\lambda }\times \varphi _{\lambda }$%
. Moreover, if {$\left( \Phi ,\varphi \right) $ is an isomorphism of }%
Hilbert pro-$C^{\ast }$-bimodules and $\varphi _{\lambda },\lambda \in
\Lambda $ are $C^{\ast }$-isomorphisms, then $\left( \Phi _{\lambda
},\varphi _{\lambda }\right) ,$ $\lambda \in \Lambda $ are isomorphisms of
Hilbert $C^{\ast }$-bimodules.
\end{lemma}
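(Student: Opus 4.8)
The plan is to construct the level maps $\Phi_\lambda$ directly from the seminorm estimates forced by the bimodule relations, to assemble them into an inverse system via the Arens--Michael connecting maps, and then to transport everything to the crossed products through the universal property. First I would produce $\Phi_\lambda$. From the defining identity $\langle \Phi(x),\Phi(x)\rangle_B = \varphi(\langle x,x\rangle_A)$ together with the index-wise contractivity $q_\lambda(\varphi(a))\le p_\lambda(a)$ (which is exactly what the decomposition $\varphi=\lim_{\leftarrow\lambda}\varphi_\lambda$ encodes, via $\varphi_\lambda\circ\pi_\lambda^A=\pi_\lambda^B\circ\varphi$), one gets $q_\lambda^B(\Phi(x))\le p_\lambda^A(x)$ for every $x\in X$. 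Hence $\Phi(\ker p_\lambda^A)\subseteq\ker q_\lambda^B$, and $\Phi$ descends to a well-defined linear map $\Phi_\lambda\colon X_\lambda\to Y_\lambda$ with $\Phi_\lambda\circ\sigma_\lambda^X=\sigma_\lambda^Y\circ\Phi$. That $(\Phi_\lambda,\varphi_\lambda)$ is a morphism of Hilbert $C^\ast$-bimodules is then a routine transcription: applying $\sigma_\lambda$ and $\pi_\lambda$ to each of the four defining relations of $(\Phi,\varphi)$ and using the descriptions of the inner products and module actions on $X_\lambda$ and $Y_\lambda$ turns, for instance, $\langle\Phi(x),\Phi(y)\rangle_B=\varphi(\langle x,y\rangle_A)$ into $\langle\Phi_\lambda(\cdot),\Phi_\lambda(\cdot)\rangle_{B_\lambda}=\varphi_\lambda(\langle\cdot,\cdot\rangle_{A_\lambda})$, and similarly for the left inner product and the two module identities.

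Next I would record the compatibility with the connecting maps. For $\lambda\ge\mu$ the definitions give $\sigma_{\lambda\mu}^Y\circ\Phi_\lambda=\Phi_\mu\circ\sigma_{\lambda\mu}^X$ and $\pi_{\lambda\mu}^B\circ\varphi_\lambda=\varphi_\mu\circ\pi_{\lambda\mu}^A$, that is $(\sigma_{\lambda\mu}^Y,\pi_{\lambda\mu}^B)\circ(\Phi_\lambda,\varphi_\lambda)=(\Phi_\mu,\varphi_\mu)\circ(\sigma_{\lambda\mu}^X,\pi_{\lambda\mu}^A)$ as morphisms of Hilbert $C^\ast$-bimodules; under the identifications $X=\lim_{\leftarrow\lambda}X_\lambda$ and $Y=\lim_{\leftarrow\lambda}Y_\lambda$ this is precisely $\Phi=\lim_{\leftarrow\lambda}\Phi_\lambda$. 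Passing to crossed products, both $(\sigma_{\lambda\mu}^Y\times\pi_{\lambda\mu}^B)\circ(\Phi_\lambda\times\varphi_\lambda)$ and $(\Phi_\mu\times\varphi_\mu)\circ(\sigma_{\lambda\mu}^X\times\pi_{\lambda\mu}^A)$ are pro-$C^\ast$-morphisms out of $A_\lambda\times_{X_\lambda}\mathbb{Z}$ that intertwine the covariant representation $(i_{X_\lambda},i_{A_\lambda})$ with one and the same covariant representation into $B_\mu\times_{Y_\mu}\mathbb{Z}$, so by the uniqueness clause of the universal property (equivalently, by functoriality of $\times$) they coincide. Since the maps $\sigma_{\lambda\mu}^\bullet\times\pi_{\lambda\mu}^\bullet$ are the connecting morphisms of the inverse systems $\{A_\lambda\times_{X_\lambda}\mathbb{Z}\}$ and $\{B_\lambda\times_{Y_\lambda}\mathbb{Z}\}$ that realise $A\times_X\mathbb{Z}$ and $B\times_Y\mathbb{Z}$, this shows $(\Phi_\lambda\times\varphi_\lambda)_\lambda$ is an inverse system of $C^\ast$-morphisms; and comparing $\Phi\times\varphi$ with $\lim_{\leftarrow\lambda}\Phi_\lambda\times\varphi_\lambda$ against the universal property of $A\times_X\mathbb{Z}$ again — both intertwine $(i_X,i_A)$ with $(i_Y\circ\Phi,i_B\circ\varphi)$ — yields their equality.

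For the \emph{moreover} statement, assume $(\Phi,\varphi)$ is an isomorphism and each $\varphi_\lambda$ is a $C^\ast$-isomorphism. Being a $C^\ast$-isomorphism, $\varphi_\lambda$ is isometric, so $q_\lambda(\varphi(a))=\|\varphi_\lambda(\pi_\lambda^A(a))\|=p_\lambda(a)$; consequently $p_\lambda(\varphi^{-1}(b))=q_\lambda(b)$, and the analogous inner-product computation upgrades the estimates to $q_\lambda^B(\Phi(x))=p_\lambda^A(x)$ and $p_\lambda^A(\Phi^{-1}(y))=q_\lambda^B(y)$. In particular $\varphi^{-1}$ and $\Phi^{-1}$ descend, with the same index $\lambda$, to maps $(\varphi^{-1})_\lambda$ and $(\Phi^{-1})_\lambda$, and the relations defining these descents force $(\varphi^{-1})_\lambda\circ\varphi_\lambda=\mathrm{id}_{A_\lambda}$ and $(\Phi^{-1})_\lambda\circ\Phi_\lambda=\mathrm{id}_{X_\lambda}$ (and symmetrically), so that $(\varphi^{-1})_\lambda=(\varphi_\lambda)^{-1}$ and $(\Phi^{-1})_\lambda=(\Phi_\lambda)^{-1}$. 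Applying the first part of the lemma to $(\Phi^{-1},\varphi^{-1})$ shows that $((\Phi_\lambda)^{-1},(\varphi_\lambda)^{-1})=((\Phi^{-1})_\lambda,(\varphi^{-1})_\lambda)$ is again a morphism of Hilbert $C^\ast$-bimodules, whence each $(\Phi_\lambda,\varphi_\lambda)$ is an isomorphism.

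I expect the main obstacle to be the middle step, namely confirming that the induced morphisms $\Phi_\lambda\times\varphi_\lambda$ genuinely assemble into an inverse system whose limit is $\Phi\times\varphi$. This rests on the realisation of the crossed product as the inverse limit $\lim_{\leftarrow\lambda}A_\lambda\times_{X_\lambda}\mathbb{Z}$ underlying the construction in \cite{JZ} and on the functoriality of $\times$ being compatible with that realisation; once this is in hand, the level-wise construction of $\Phi_\lambda$ and the norm bookkeeping in the isomorphism step are essentially mechanical.
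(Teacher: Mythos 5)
The paper does not prove this lemma itself — it imports it verbatim from \cite[Lemma 4.2]{JZ2} — so there is no in-paper proof to compare against; judged on its own terms, your argument is correct and is exactly the expected one: the estimate $q_{\lambda }^{B}(\Phi (x))\leq p_{\lambda }^{A}(x)$ extracted from $\left\langle \Phi (x),\Phi (x)\right\rangle _{B}=\varphi \left( \left\langle x,x\right\rangle _{A}\right) $ and the contractivity of $\varphi _{\lambda }$ is the right way to get the quotient maps $\Phi _{\lambda }$, the generator argument via the universal property correctly yields the inverse system of crossed-product morphisms and the identification $\Phi \times \varphi =\lim\limits_{\leftarrow \lambda }\Phi _{\lambda }\times \varphi _{\lambda }$ (using $\left( A\times _{X}\mathbb{Z}\right) _{\lambda }=A_{\lambda }\times _{X_{\lambda }}\mathbb{Z}$ from \cite[Proposition 3.8]{JZ}), and the isometry upgrade in the \emph{moreover} step is sound. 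No gaps.
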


\begin{proposition}
\label{p1}Let $\left( X,A\right) $ and $\left( Y,B\right) $ be \textit{%
Hilbert pro-}$C^{\ast }$\textit{-bimodules. }If the Hilbert pro-$C^{\ast }$%
-bimodules $\left( X,A\right) $ and $\left( Y,B\right) $ are isomorphic,
then the pro-$C^{\ast }$-algebras $A\times _{X}\mathbb{Z}$ to $B\times _{Y}%
\mathbb{Z}$ are isomorphic.
\end{proposition}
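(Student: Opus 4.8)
The plan is to build an explicit pro-$C^{\ast}$-isomorphism between the two crossed products directly from the given bimodule isomorphism, using the functoriality recorded in the first Remark above, and then to verify invertibility by means of the uniqueness clause in the universal property. Let $\left( \Phi ,\varphi \right) :(X,A)\rightarrow (Y,B)$ be an isomorphism of Hilbert pro-$C^{\ast }$-bimodules. By the Remark, $\left( \Phi ,\varphi \right) $ induces a pro-$C^{\ast }$-morphism $\Phi \times \varphi :A\times _{X}\mathbb{Z}\rightarrow B\times _{Y}\mathbb{Z}$ characterized by $\left( \Phi \times \varphi \right) \circ i_{A}=i_{B}\circ \varphi $ and $\left( \Phi \times \varphi \right) \circ i_{X}=i_{Y}\circ \Phi $. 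Since $\left( \Phi ,\varphi \right) $ is an isomorphism, by definition $\left( \Phi ^{-1},\varphi ^{-1}\right) :(Y,B)\rightarrow (X,A)$ is again a morphism of Hilbert pro-$C^{\ast }$-bimodules, so the same Remark produces a pro-$C^{\ast }$-morphism $\Phi ^{-1}\times \varphi ^{-1}:B\times _{Y}\mathbb{Z}\rightarrow A\times _{X}\mathbb{Z}$ in the opposite direction. The whole proof then reduces to showing that these two induced morphisms are mutually inverse.

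To carry this out I would compute the composite $\left( \Phi \times \varphi \right) \circ \left( \Phi ^{-1}\times \varphi ^{-1}\right) $ on the canonical covariant representation of $B\times _{Y}\mathbb{Z}$. Using the defining relations together with their analogues for $\Phi ^{-1}\times \varphi ^{-1}$, one gets $\left( \Phi \times \varphi \right) \circ \left( \Phi ^{-1}\times \varphi ^{-1}\right) \circ i_{B}=i_{B}\circ \varphi \circ \varphi ^{-1}=i_{B}$ and $\left( \Phi \times \varphi \right) \circ \left( \Phi ^{-1}\times \varphi ^{-1}\right) \circ i_{Y}=i_{Y}\circ \Phi \circ \Phi ^{-1}=i_{Y}$. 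Hence both this composite and $\mathrm{id}_{B\times _{Y}\mathbb{Z}}$ are pro-$C^{\ast }$-morphisms extending the covariant representation $\left( i_{Y},i_{B}\right) $ of $\left( Y,B\right) $, and by the uniqueness in the universal property of $B\times _{Y}\mathbb{Z}$ they must coincide. The symmetric computation gives $\left( \Phi ^{-1}\times \varphi ^{-1}\right) \circ \left( \Phi \times \varphi \right) =\mathrm{id}_{A\times _{X}\mathbb{Z}}$, so $\Phi \times \varphi $ is the desired pro-$C^{\ast }$-isomorphism.

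A second route, and the one for which the preceding Lemma is set up, is to reduce the statement level by level to the known invariance at the $C^{\ast }$-algebra level. After reindexing so that $\Gamma $ and $\Gamma ^{\prime }$ share the index set $\Lambda $ and $\varphi =\lim_{\leftarrow \lambda }\varphi _{\lambda }$ with each $\varphi _{\lambda }$ a $C^{\ast }$-isomorphism, Lemma \ref{Conjugate} yields that each $\left( \Phi _{\lambda },\varphi _{\lambda }\right) $ is an isomorphism of Hilbert $C^{\ast }$-bimodules and that $\Phi \times \varphi =\lim_{\leftarrow \lambda }\left( \Phi _{\lambda }\times \varphi _{\lambda }\right) $; since each $\Phi _{\lambda }\times \varphi _{\lambda }$ is a $C^{\ast }$-isomorphism (by running the functorial argument inside the $C^{\ast }$-category, or by the known $C^{\ast }$-algebraic result), the inverse limit is a pro-$C^{\ast }$-isomorphism. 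I expect the delicate point in either route to be bookkeeping rather than substance: in the functorial argument, making sure the composites genuinely extend the canonical covariant representation so that uniqueness applies; in the inverse-limit argument, arranging the common index set so that the reindexed $\varphi _{\lambda }$ are honest $C^{\ast }$-isomorphisms and that the induced maps on the crossed products assemble into a compatible inverse system.
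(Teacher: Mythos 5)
Your proposal is correct, and it actually contains two valid arguments. Your second route is essentially the paper's own proof: reindex so that $\Gamma$ and $\Gamma'$ share the index set $\Lambda$, write $\varphi=\lim_{\leftarrow\lambda}\varphi_{\lambda}$ with each $\varphi_{\lambda}$ a $C^{\ast}$-isomorphism, invoke Lemma \ref{Conjugate} to get that each $(\Phi_{\lambda},\varphi_{\lambda})$ is an isomorphism of Hilbert $C^{\ast}$-bimodules and that $\Phi\times\varphi=\lim_{\leftarrow\lambda}(\Phi_{\lambda}\times\varphi_{\lambda})$, and then quote the $C^{\ast}$-level invariance (the paper cites Abadie's Remark 2.2) to conclude that the inverse limit of $C^{\ast}$-isomorphisms is a pro-$C^{\ast}$-isomorphism. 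Your first route, however, is genuinely different and arguably cleaner: you apply the functoriality Remark to both $(\Phi,\varphi)$ and $(\Phi^{-1},\varphi^{-1})$, compute that each composite fixes the canonical covariant representation (e.g.\ $(\Phi\times\varphi)\circ(\Phi^{-1}\times\varphi^{-1})\circ i_{B}=i_{B}$ and likewise on $i_{Y}$), and then use the uniqueness clause of the universal property, noting that the identity is the unique pro-$C^{\ast}$-morphism extending $(i_{Y},i_{B})$. This is a purely categorical argument that works directly at the pro-$C^{\ast}$ level, needs no Arens--Michael decomposition, no reindexing of the seminorm families, and no appeal to the known $C^{\ast}$-algebraic result; its only inputs are the Remark and the universal property. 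What the paper's route buys in exchange is explicit structural information --- it exhibits $\Phi\times\varphi$ as an inverse limit of $C^{\ast}$-isomorphisms compatible with the identifications $(A\times_{X}\mathbb{Z})_{\lambda}=A_{\lambda}\times_{X_{\lambda}}\mathbb{Z}$, which is the same bookkeeping the author reuses in the proofs of Theorems \ref{tensor} and \ref{tensor1}. Both arguments are sound; the only point you flag as delicate in the functorial route (that the composites genuinely extend the canonical covariant representation) is exactly the two-line computation you already wrote out, so there is no gap.
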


\begin{proof}
Let $\left( \Phi ,\varphi \right) :\left( X,A\right) \rightarrow \left(
Y,B\right) $ be an isomorphism of Hilbert pro-$C^{\ast }$-bimodules. Since $%
\varphi $ is a pro-$C^{\ast }$-isomorphism, we can suppose that $\Gamma $
and $\Gamma ^{\prime }$ have the same index set, $\Lambda $, and $\varphi
=\lim\limits_{\leftarrow \lambda }\varphi _{\lambda }$, where $\varphi
_{\lambda }:A_{\lambda }\rightarrow B_{\lambda },$ $\lambda \in \Lambda $
are $C^{\ast }$-isomorphisms \cite[Lemma 3.7]{JZ}. Then, by Lemma \ref%
{Conjugate}, $\Phi =\lim\limits_{\leftarrow \lambda }\Phi _{\lambda }$ and $%
\left( \Phi _{\lambda },\varphi _{\lambda }\right) $ is an isomorphism of
Hilbert $C^{\ast }$-bimodules for all $\lambda \in \Lambda $. By \cite[%
Remark 2.2]{A}, for each $\lambda \in \Lambda ,$ $\Phi _{\lambda }\times
\varphi _{\lambda }:$ $A_{\lambda }\times _{X_{\lambda }}\mathbb{%
Z\rightarrow }B_{\lambda }\times _{Y_{\lambda }}\mathbb{Z}$ is a $C^{\ast }$%
-isomorphism. Moreover, $\left( \Phi _{\lambda }\times \varphi _{\lambda
}\right) _{\lambda }$ is an inverse system of $C^{\ast }$-isomorphisms, and
then $\Phi \times \varphi =\lim\limits_{\leftarrow \lambda }$ $\Phi
_{\lambda }\times \varphi _{\lambda }$ is a pro-$C^{\ast }$-isomorphism.
Therefore, the pro-$C^{\ast }$-algebras $A\times _{X}\mathbb{Z}$ to $B\times
_{Y}\mathbb{Z}$ are isomorphic.
\end{proof}

By \cite{J4}, if $\alpha $ is an inverse limit automorphism of a pro-$%
C^{\ast }$-algebra $A$ and $B$ is another pro-$C^{\ast }$-algebra, then the
minimal tensor product $\left( A\times _{\alpha }\mathbb{Z}\right) \otimes
_{\min }B$ of the crossed product $A\times _{\alpha }\mathbb{Z}$ of $A$ by $%
\alpha $ and $B$ and the crossed product $\left( A\otimes _{\min }B\right)
\times _{\alpha \otimes _{\min }\text{id}}\mathbb{Z}$ of the minimal tensor
product $A\otimes _{\min }B$ of $A$ and $B$ by the automorphism $\alpha
\otimes _{\min }$id are isomorphic as pro-$C^{\ast }$-algebras. Also the
maximal tensor product $\left( A\times _{\alpha }\mathbb{Z}\right) \otimes
_{\max }B$ of the crossed product $A\times _{\alpha }\mathbb{Z}$ of $A$ by $%
\alpha $ and $B$ and the crossed product $\left( A\otimes _{\max }B\right)
\times _{\alpha \otimes _{\max }\text{id}}\mathbb{Z}$ of the maximal tensor
product $A\otimes _{\max }B$ of $A$ and $B$ by the automorphism $\alpha
\otimes _{\max }$id are isomorphic as pro-$C^{\ast }$-algebras. We refer the
reader to \cite{F} for further information about tensor products of pro-$%
C^{\ast }$-algebras. The following theorems extend these results for crossed
products by Hilbert (pro-) $C^{\ast }$-bimodules.

Let $\left( X,A\right) $ and $\left( Y,B\right) $ be two Hilbert pro-$%
C^{\ast }$-bimodules. Then $X\otimes _{\text{alg }}Y$ has a natural
structure of pre-Hilbert $A\otimes _{\text{alg }}B$-$A\otimes _{\text{alg }%
}B $ bimodule with the bimodule structure given by 
\begin{equation*}
\left( a\otimes b\right) \left( x\otimes y\right) =ax\otimes by\text{ and }%
\left( x\otimes y\right) \left( a\otimes b\right) =xa\otimes yb
\end{equation*}%
for all $a\in A,b\in B,$ $x\in X$ and $y\in Y$, and the inner products given
by 
\begin{equation*}
_{A\otimes _{\text{alg }}B}\left\langle x_{1}\otimes y_{1},x_{2}\otimes
y_{2}\right\rangle =\text{ }_{A}\left\langle x_{1},x_{2}\right\rangle
\otimes \text{ }_{B}\left\langle y_{1},y_{2}\right\rangle \text{ }
\end{equation*}%
and%
\begin{equation*}
\left\langle x_{1}\otimes y_{1},x_{2}\otimes y_{2}\right\rangle _{A\otimes _{%
\text{alg }}B}=\text{ }\left\langle x_{1},x_{2}\right\rangle _{A}\otimes 
\text{ }\left\langle y_{1},y_{2}\right\rangle _{B}
\end{equation*}%
for all $x_{1},x_{2}\in X$ and $y_{1},y_{2}\in Y$. Clearly, 
\begin{equation*}
_{A\otimes _{\text{alg }}B}\left\langle x_{1}\otimes y_{1},x_{2}\otimes
y_{2}\right\rangle \left( x_{3}\otimes y_{3}\right) =\left( x_{1}\otimes
y_{1}\right) \left\langle x_{2}\otimes y_{2},x_{3}\otimes y_{3}\right\rangle
_{A\otimes _{\text{alg }}B}
\end{equation*}%
for all $x_{1},x_{2},x_{3}\in X$ and for all $y_{1},y_{2},y_{3}\in Y$. It is
easy to check that 
\begin{equation*}
p_{\left( \lambda ,\delta \right) }^{A\otimes _{\min }B}\left( x\otimes
y\right) =\text{ }^{A\otimes _{\min }B}p_{\left( \lambda ,\delta \right)
}\left( x\otimes y\right)
\end{equation*}%
and 
\begin{equation*}
p_{\left( \lambda ,\delta \right) }^{A\otimes _{\max }B}\left( x\otimes
y\right) =\text{ }^{A\otimes _{\max }B}p_{\left( \lambda ,\delta \right)
}\left( x\otimes y\right)
\end{equation*}%
for all $x\in X,$ $y\in Y\ $and $\left( \lambda ,\delta \right) $ $\in
\Lambda \times \Delta $. Then the completion of $X\otimes _{\text{alg }}Y$
with respect to the topology given by the family of seminorms $\{p_{\left(
\lambda ,\delta \right) }^{A\otimes _{\min }B}\}_{\left( \lambda ,\delta
\right) \in \Lambda \times \Delta }$ is a Hilbert $A\otimes _{\min }B$-$%
A\otimes _{\min }B$ pro-$C^{\ast }$-bimodule, denoted by $X\otimes _{\min
}Y\ $(see, \cite{J1}), and it is called the minimal tensor product of $%
\left( X,A\right) $ and $\left( Y,B\right) $. The completion of $X\otimes _{%
\text{alg }}Y$ with respect to the topology given by the family of seminorms 
$\{p_{\left( \lambda ,\delta \right) }^{A\otimes _{\max }B}\}_{\left(
\lambda ,\delta \right) \in \Lambda \times \Delta }$ is a Hilbert $A\otimes
_{\max }B$-$A\otimes _{\max }B$ pro-$C^{\ast }$-bimodule, denoted by $%
X\otimes _{\max }Y$, and it is called the maximal tensor product of $\left(
X,A\right) $ and $\left( Y,B\right) $. It is easy to check that for each $%
\left( \lambda ,\delta \right) $ $\in \Lambda \times \Delta ,$ the Hilbert $%
C^{\ast }$-bimodules $\left( X\otimes _{\min }Y\right) _{\left( \lambda
,\delta \right) }$ and $X_{\lambda }\otimes _{\min }Y_{\delta }$ are
isomarphic, as well as, the Hilbert $C^{\ast }$-bimodules $\left( X\otimes
_{\max }Y\right) _{\left( \lambda ,\delta \right) }$ and $X_{\lambda
}\otimes _{\max }Y_{\delta }$.

Suppose $\left( X_{i},A_{i}\right) \ $and $\left( Y_{i},B_{i}\right) ,i=1,2$
are four Hilbert pro-$C^{\ast }$-bimodules, and $\left( \varphi
_{X_{i}},\varphi _{A_{i}}\right) :\left( X_{i},A_{i}\right) \rightarrow $ $%
\left( Y_{i},B_{i}\right) ,i=1,2$ are two morphisms of Hilbert pro-$C^{\ast
} $-bimodules. It is not difficult to check that the pair $\left( \varphi
_{X_{1}}\otimes \varphi _{X_{2}},\varphi _{A_{1}}\otimes \varphi
_{A_{2}}\right) $ of linear maps, where $\varphi _{X_{1}}\otimes \varphi
_{X_{2}}:X_{1}\otimes _{\text{alg}}X_{2}\rightarrow Y_{1}\otimes _{\text{alg}%
}Y_{2}$ is given by $\varphi _{X_{1}}\otimes \varphi _{X_{2}}\left(
x_{1}\otimes x_{2}\right) =\varphi _{X_{1}}\left( x_{1}\right) \otimes
\varphi _{X_{2}}\left( x_{2}\right) ,$ $x_{i}\in X_{i},i=1,2$ and $\varphi
_{A_{1}}\otimes \varphi _{A_{2}}:A_{1}\otimes _{\text{alg}}A_{2}\rightarrow
B_{1}\otimes _{\text{alg}}B_{2}$ is given by $\varphi _{A_{1}}\otimes
\varphi _{A_{2}}\left( a_{1}\otimes a_{2}\right) =\varphi _{A_{1}}\left(
a_{1}\right) \otimes \varphi _{A_{2}}\left( a_{2}\right) $, $a_{i}\in
A_{i},i=1,2$, extends to a morphism of Hilbert pro-$C^{\ast }$-bimodules $%
\left( \varphi _{X_{1}}\otimes _{\min }\varphi _{X_{2}},\varphi
_{A_{1}}\otimes _{\min }\varphi _{A_{2}}\right) $ from $\left( X_{1}\otimes
_{\min }X_{2},A_{1}\otimes _{\min }A_{2}\right) $ to $\left( Y_{1}\otimes
_{\min }Y_{2},B_{1}\otimes _{\min }B_{2}\right) $ and a morphism of Hilbert
pro-$C^{\ast }$-bimodules $\left( \varphi _{X_{1}}\otimes _{\max }\varphi
_{X_{2}},\varphi _{A_{1}}\otimes _{\max }\varphi _{A_{2}}\right) $ from $%
\left( X_{1}\otimes _{\max }X_{2},A_{1}\otimes _{\max }A_{2}\right) $ to $%
\left( Y_{1}\otimes _{\max }Y_{2},B_{1}\otimes _{\max }B_{2}\right) $.

We remark that if $\alpha \ $is an inverse limit automorphism of $A$ and $B$
is a pro-$C^{\ast }$-algebra, then the Hilbert pro-$C^{\ast }$-bimodules $%
X_{\alpha \otimes _{\min }\text{id}}$ and $X\otimes _{\min }B$ are
isomorphic, as well as, the Hilbert pro-$C^{\ast }$-bimodules $X_{\alpha
\otimes _{\max }\text{id}}$ and $X\otimes _{\max }B$. The isomorphism is
given by $\left( \varphi _{X_{\alpha \otimes _{\min }\text{id}}},\varphi
_{A\otimes _{\min }B}\right) $, where $\varphi _{X_{\alpha \otimes _{\min }%
\text{id}}}\left( \xi _{x\otimes y}\right) =\xi _{\alpha \left( x\right)
}\otimes \xi _{y};x\in A,y\in B$ and, $\varphi _{A\otimes _{\min }B}\left(
a\otimes b\right) =\alpha \left( a\right) \otimes b;a\in A,b\in B$,
respectively $\left( \varphi _{X_{\alpha \otimes _{\max }\text{id}}},\varphi
_{A\otimes _{\max }B}\right) $, where $\varphi _{X_{\alpha \otimes _{\max }%
\text{id}}}\left( \xi _{x\otimes b}\right) =\xi _{\alpha \left( x\right)
}\otimes \xi _{y};x\in A,y\in B$ and $\varphi _{A\otimes _{\max }B}\left(
a\otimes b\right) =\alpha \left( a\right) \otimes b;a\in A,b\in B$.\bigskip

In order to prove Theorem \ref{tensor} below we are going to use the notion
of dual action on crossed products by Hilbert bimodules \cite[Section 3]{AEE}%
. Let $\left( X,A\right) $ be a Hilbert $C^{\ast }$-bimodule. Then there is
an action $\delta ^{X}$ of $\mathbb{T}$ on $A\times _{X}\mathbb{Z}$, called
the dual action, which is identity on $i_{A}(A)$ and $\delta _{t}^{X}\left(
i_{X}\left( x\right) \right) =ti_{X}\left( x\right) $ for all $t\in \mathbb{T%
}$ and for all $x\in X$.

\begin{theorem}
\label{tensor}Let $\left( X,A\right) $ be a Hilbert pro-$C^{\ast }$-bimodule
and let $B$ be a pro-$C^{\ast }$-algebra. Then the pro-$C^{\ast }$-algebras $%
\left( A\otimes _{\min }B\right) \times _{X\otimes _{\min }B}\mathbb{Z}$ and 
$\left( A\times _{X}\mathbb{Z}\right) \otimes _{\min }$ $B$ are isomorphic.
\end{theorem}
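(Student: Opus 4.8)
The plan is to reduce the statement to the level of $C^{\ast }$-algebras through the Arens--Michael decomposition, in the spirit of the proof of Proposition \ref{p1}, and then to invoke the dual action to settle the one genuinely nontrivial point at each level. First I would construct the candidate isomorphism by the universal property of the crossed product. Viewing $C:=(A\times _{X}\mathbb{Z})\otimes _{\min }B$ as a Hilbert pro-$C^{\ast }$-bimodule over itself, I consider the pair $(i_{X}\otimes \mathrm{id}_{B},i_{A}\otimes \mathrm{id}_{B})$, where $i_{A}\otimes \mathrm{id}_{B}:A\otimes _{\min }B\rightarrow C$ and $i_{X}\otimes \mathrm{id}_{B}:X\otimes _{\min }B\rightarrow C$ are the maps induced on the minimal tensor products by the canonical covariant representation $(i_{X},i_{A})$ of $(X,A)$. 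A direct computation using the covariance relations for $(i_{X},i_{A})$ together with the definition of the inner products on $X\otimes _{\min }B$ gives, for instance, ${}_{C}\langle (i_{X}\otimes \mathrm{id}_{B})(x_{1}\otimes b_{1}),(i_{X}\otimes \mathrm{id}_{B})(x_{2}\otimes b_{2})\rangle =(i_{A}\otimes \mathrm{id}_{B})({}_{A\otimes _{\min }B}\langle x_{1}\otimes b_{1},x_{2}\otimes b_{2}\rangle )$, and the three remaining covariance identities follow the same way. Hence $(i_{X}\otimes \mathrm{id}_{B},i_{A}\otimes \mathrm{id}_{B})$ is a covariant representation of $(X\otimes _{\min }B,A\otimes _{\min }B)$ on $C$, and the universal property yields a pro-$C^{\ast }$-morphism $\Psi :(A\otimes _{\min }B)\times _{X\otimes _{\min }B}\mathbb{Z}\rightarrow C$ with $\Psi \circ i_{A\otimes _{\min }B}=i_{A}\otimes \mathrm{id}_{B}$ and $\Psi \circ i_{X\otimes _{\min }B}=i_{X}\otimes \mathrm{id}_{B}$.

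Next I would show that $\Psi $ is a pro-$C^{\ast }$-isomorphism by passing to the Arens--Michael levels indexed by $\Lambda \times \Delta $. Using the identifications $(A\otimes _{\min }B)_{(\lambda ,\delta )}\cong A_{\lambda }\otimes _{\min }B_{\delta }$ and $(X\otimes _{\min }B)_{(\lambda ,\delta )}\cong X_{\lambda }\otimes _{\min }B_{\delta }$ recorded above, together with the fact that both the crossed product by a Hilbert bimodule and the minimal tensor product commute with inverse limits (see \cite{JZ} and \cite{F}), $\Psi $ decomposes as an inverse limit $\Psi =\lim\limits_{\leftarrow (\lambda ,\delta )}\Psi _{(\lambda ,\delta )}$ of the analogous $C^{\ast }$-morphisms $\Psi _{(\lambda ,\delta )}:(A_{\lambda }\otimes _{\min }B_{\delta })\times _{X_{\lambda }\otimes _{\min }B_{\delta }}\mathbb{Z}\rightarrow (A_{\lambda }\times _{X_{\lambda }}\mathbb{Z})\otimes _{\min }B_{\delta }$, the compatibility with the connecting maps being forced by the uniqueness clause in the universal property, exactly as in Lemma \ref{Conjugate}. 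It therefore suffices to check that each $\Psi _{(\lambda ,\delta )}$ is a $C^{\ast }$-isomorphism. Surjectivity is immediate, since the range of $\Psi _{(\lambda ,\delta )}$ contains $i_{A_{\lambda }}(A_{\lambda })\otimes B_{\delta }$ and $i_{X_{\lambda }}(X_{\lambda })\otimes B_{\delta }$, which generate $(A_{\lambda }\times _{X_{\lambda }}\mathbb{Z})\otimes _{\min }B_{\delta }$.

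The main obstacle is the injectivity of each $\Psi _{(\lambda ,\delta )}$, and this is precisely where the dual action enters. Checking on generators, $\Psi _{(\lambda ,\delta )}$ intertwines the dual action $\delta ^{X_{\lambda }\otimes _{\min }B_{\delta }}$ with $\delta ^{X_{\lambda }}\otimes \mathrm{id}_{B_{\delta }}$, since both actions fix the image of $A_{\lambda }\otimes _{\min }B_{\delta }$ and multiply the image of $X_{\lambda }\otimes _{\min }B_{\delta }$ by $t\in \mathbb{T}$. Because $i_{A_{\lambda }}$ is an injective $\ast $-homomorphism and the minimal tensor product of injective $\ast $-homomorphisms is again injective, $\Psi _{(\lambda ,\delta )}$ is injective on the coefficient algebra $i_{A_{\lambda }\otimes _{\min }B_{\delta }}(A_{\lambda }\otimes _{\min }B_{\delta })$. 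The gauge-invariant uniqueness theorem for crossed products by Hilbert $C^{\ast }$-bimodules \cite{AEE} then forces $\Psi _{(\lambda ,\delta )}$ to be injective, hence a $C^{\ast }$-isomorphism. Consequently $(\Psi _{(\lambda ,\delta )})_{(\lambda ,\delta )}$ is an inverse system of $C^{\ast }$-isomorphisms, and $\Psi =\lim\limits_{\leftarrow (\lambda ,\delta )}\Psi _{(\lambda ,\delta )}$ is the desired pro-$C^{\ast }$-isomorphism, which proves the theorem.
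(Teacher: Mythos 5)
Your proposal is correct and follows essentially the same route as the paper: the universal property produces the comparison morphism, surjectivity comes from generation by the images of $i_{X}(X)\otimes _{\text{alg}}B$ and $i_{A}(A)\otimes _{\text{alg}}B$, and injectivity at each Arens--Michael level is obtained from equivariance with respect to the dual circle actions together with injectivity on the coefficient algebra (the paper invokes \cite[Proposition 2.9]{E} for this gauge-invariance argument). The only cosmetic difference is that you build the pro-$C^{\ast }$-morphism first and then decompose it, whereas the paper proves the $C^{\ast }$-case first and assembles the inverse limit afterwards.
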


\begin{proof}
We divide the proof into two parts.

\textit{Step 1.} We suppose that $A$ and $B$ are $C^{\ast }$-algebras.

It is easy to check that $\left( i_{X}\otimes _{\min }\text{id}%
_{B},i_{A}\otimes _{\min }\text{id}_{B}\right) $ is a covariant
representation of $\left( X\otimes _{\min }B,A\otimes _{\min }B\right) $ on $%
\left( A\times _{X}\mathbb{Z}\right) $ $\otimes _{\min }$ $B$. By the
universal property of the crossed product by Hilbert $C^{\ast }$-bimodules,
there is a unique $C^{\ast }$-morphism $\Phi :\left( A\otimes _{\min
}B\right) \times _{X\otimes _{\min }B}\mathbb{Z}$ $\rightarrow $ $\left(
A\times _{X}\mathbb{Z}\right) \otimes _{\min }$ $B$ such that $\Phi \circ
i_{X\otimes _{\min }B}=i_{X}\otimes _{\min }$id$_{B}$ and $\Phi \circ
i_{A\otimes _{\min }B}=i_{A}\otimes _{\min }$id$_{B}$. Since $\Phi $ is a $%
C^{\ast }$-morphism, and since $\left( A\times _{X}\mathbb{Z}\right) \otimes
_{\min }$ $B$ is generated by $i_{X}(X)\otimes _{\text{alg}}B$ and $%
i_{A}(A)\otimes _{\text{alg}}B$, $\Phi $ is surjective.

On the other hand, $\Phi |_{i_{A\otimes _{\min }B}\left( A\otimes _{\min
}B\right) }$ is injective, since $\Phi |_{i_{A\otimes _{\min }B}\left(
A\otimes _{\min }B\right) }=i_{A}\otimes $id$_{B}$ and $i_{A}$ is injective.
If $\delta ^{X\otimes _{\min }B}$ and $\delta ^{X}$ are the dual actions of $%
\mathbb{T}$ on $\left( A\otimes _{\min }B\right) $ $\times _{X\otimes _{\min
}B}\mathbb{Z},$ respectively $A\times _{X}\mathbb{Z}$ (see \cite[Section 3]%
{AEE}), and $\delta ^{X}\otimes $id is an action of $\mathbb{T}$ on $\left(
A\times _{X}\mathbb{Z}\right) \otimes _{\min }B$ given by $\left( \delta
^{X}\otimes \text{id}\right) _{t}=\delta _{t}^{X}\otimes $id$_{B}$, then $%
\Phi $ is $\left( \delta ^{X\otimes _{\min }B},\delta ^{X}\otimes \text{id}%
\right) $-covariant, that is $\Phi \circ \delta _{t}^{X\otimes _{\min
}B}=\left( \delta ^{X}\otimes \text{id}\right) _{t}\circ \Phi $ for all $%
t\in \mathbb{T}$, since%
\begin{eqnarray*}
&&\left( \Phi \circ \delta _{t}^{X\otimes _{\min }B}\right) \left(
i_{X\otimes _{\min }B}\left( x\otimes b\right) \right) =\Phi \left(
ti_{X\otimes _{\min }B}\left( x\otimes b\right) \right) \\
&=&t\Phi \left( i_{X\otimes _{\min }B}\left( x\otimes b\right) \right)
=ti_{X}\left( x\right) \otimes b=\left( \delta ^{X}\otimes \text{id}\right)
_{t}\left( i_{X}\left( x\right) \otimes b\right) \\
&=&\left( \delta ^{X}\otimes \text{id}\right) _{t}\left( \Phi \left(
i_{X\otimes _{\min }B}\left( x\otimes b\right) \right) \right) =\left(
\left( \delta ^{X}\otimes \text{id}\right) _{t}\circ \Phi \right) \left(
i_{X\otimes _{\min }B}\left( x\otimes b\right) \right)
\end{eqnarray*}%
for all $x\in X$, $b\in B$, $t\in \mathbb{T}$ and%
\begin{eqnarray*}
&&\left( \Phi \circ \delta _{t}^{X\otimes _{\min }B}\right) \left(
i_{A\otimes _{\min }B}\left( a\otimes b\right) \right) =\Phi \left(
i_{A\otimes _{\min }B}\left( a\otimes b\right) \right) =i_{A}\left( a\right)
\otimes b \\
&=&\left( \delta ^{X}\otimes \text{id}\right) _{t}\left( i_{A}\left(
a\right) \otimes b\right) =\left( \delta ^{X}\otimes \text{id}\right)
_{t}\left( \Phi \left( i_{A\otimes _{\min }B}\left( a\otimes b\right)
\right) \right) \\
&=&\left( \left( \delta ^{X}\otimes \text{id}\right) _{t}\circ \Phi \right)
\left( i_{A\otimes _{\min }B}\left( a\otimes b\right) \right)
\end{eqnarray*}%
for all $a\in A$, $b\in B$ and $t\in \mathbb{T}$. Then, by \cite[Proposition
2.9]{E}, $\Phi $ is injective, and so it is a $C^{\ast }$-isomorphism.

\textit{Step 2.} We suppose that $A$ and $B$ are pro-$C^{\ast }$-algebras.

By \cite[Proposition 3.8]{JZ}, we can suppose that $A\times _{X}\mathbb{Z=}%
\lim\limits_{\leftarrow \lambda }A_{\lambda }\times _{X_{\lambda }}\mathbb{Z}
$ with $\left( A\times _{X}\mathbb{Z}\right) _{\lambda }=A_{\lambda }\times
_{X_{\lambda }}\mathbb{Z}$ for each $\lambda \in \Lambda ,$ and $\left(
A\otimes _{\min }B\right) \times _{X\otimes _{\min }B}\mathbb{Z=}%
\lim\limits_{\leftarrow \left( \lambda ,\delta \right) }$ $\left( A_{\lambda
}\otimes _{\min }B_{\delta }\right) \times _{X_{\lambda }\otimes _{\min
}B_{\delta }}\mathbb{Z\ \ }$ with\ $\left( \left( A\otimes _{\min }B\right)
\times _{X\otimes _{\min }B}\mathbb{Z}\right) _{\left( \lambda ,\delta
\right) }=\left( A_{\lambda }\otimes _{\min }B_{\delta }\right) $ $\ \ \
\times _{X_{\lambda }\otimes _{\min }B_{\delta }}\mathbb{Z}$ for each $%
(\lambda ,\delta )\in \Lambda \times \Delta $.

Let $(\lambda ,\delta )\in \Lambda \times \Delta $. According to\textit{\
Step 1}, there is a $C^{\ast }$-isomorphism $\Phi _{(\lambda ,\delta
)}:\left( A_{\lambda }\otimes _{\min }B_{\delta }\right) \times _{X_{\lambda
}\otimes _{\min }B_{\delta }}\mathbb{Z}$ $\rightarrow $ $\left( A_{\lambda
}\times _{X_{\lambda }}\mathbb{Z}\right) \otimes _{\min }$ $B_{\delta }$
such that $\Phi _{(\lambda ,\delta )}\circ i_{X_{\lambda }\otimes _{\min
}B_{\delta }}=i_{X_{\lambda }}\otimes $id$_{B_{\delta }}$ and $\Phi
_{(\lambda ,\delta )}\circ i_{A_{\lambda }\otimes _{\min }B_{\delta
}}=i_{A_{\lambda }}\otimes $id$_{B_{\delta }}$. $\ $From 
\begin{eqnarray*}
&&\pi _{\left( \lambda _{1},\delta _{1}\right) \left( \lambda _{2},\delta
_{2}\right) }^{\left( A\times _{X}\mathbb{Z}\right) \otimes _{\min }B}\circ
\Phi _{\left( \lambda _{1},\delta _{1}\right) }\left( i_{X_{\lambda
_{1}}\otimes _{\min }B_{\delta _{1}}}\left( z\right) \right) =\pi _{\left(
\lambda _{1},\delta _{1}\right) \left( \lambda _{2},\delta _{2}\right)
}^{\left( A\times _{X}\mathbb{Z}\right) \otimes _{\min }B}\left(
i_{X_{\lambda _{1}}}\otimes \text{id}_{B_{\delta _{1}}}\left( z\right)
\right) \\
&=&\pi _{\lambda _{1}\lambda _{2}}^{A\times _{X}\mathbb{Z}}\otimes \pi
_{\delta _{1}\delta _{2}}^{B}\left( i_{X_{\lambda _{1}}}\otimes \text{id}%
_{B_{\delta _{1}}}\left( z\right) \right) =i_{X_{\lambda _{2}}}\otimes \text{%
id}_{B_{\delta _{2}}}\left( \sigma _{\lambda _{1}\lambda _{2}}^{X}\otimes
\pi _{\delta _{1}\delta _{2}}^{B}\left( z\right) \right) \\
&=&\Phi _{\left( \lambda _{2},\delta _{2}\right) }\circ i_{X_{\lambda
_{2}}\otimes _{\min }B_{\delta _{2}}}\left( \sigma _{\lambda _{1}\lambda
_{2}}^{X}\otimes \pi _{\delta _{1}\delta _{2}}^{B}\left( z\right) \right) \\
&=&\Phi _{\left( \lambda _{2},\delta _{2}\right) }\circ \pi _{\left( \lambda
_{1},\delta _{1}\right) \left( \lambda _{2},\delta _{2}\right) }^{\left(
A\times _{X}\mathbb{Z}\right) \otimes _{\min }B}\left( i_{X_{\lambda
_{1}}\otimes _{\min }B_{\delta _{1}}}\left( z\right) \right)
\end{eqnarray*}%
for all $z\in X_{\lambda _{1}}\otimes _{\min }B_{\delta _{1}}$ and 
\begin{eqnarray*}
&&\pi _{\left( \lambda _{1},\delta _{1}\right) \left( \lambda _{2},\delta
_{2}\right) }^{\left( A\times _{X}\mathbb{Z}\right) \otimes _{\min }B}\circ
\Phi _{\left( \lambda _{1},\delta _{1}\right) }\left( i_{A_{\lambda
_{1}}\otimes _{\min }B_{\delta _{1}}}\left( a\otimes b\right) \right) \\
&=&\pi _{\left( \lambda _{1},\delta _{1}\right) \left( \lambda _{2},\delta
_{2}\right) }^{\left( A\times _{X}\mathbb{Z}\right) \otimes _{\min }B}\left(
i_{A_{\lambda _{1}}}\left( a\right) \otimes \text{id}_{B_{\delta
_{1}}}\left( b\right) \right) =\pi _{\lambda _{1}\lambda _{2}}^{A\times _{X}%
\mathbb{Z}}\otimes \pi _{\delta _{1}\delta _{2}}^{B}\left( i_{A_{\lambda
_{1}}}\left( a\right) \otimes b\right) \\
&=&i_{A_{\lambda _{2}}}\otimes \text{id}_{B_{\delta _{2}}}\left( \pi
_{\lambda _{1}\lambda _{2}}^{A}\left( a\right) \otimes \pi _{\delta
_{1}\delta _{2}}^{B}\left( b\right) \right) \\
&=&\Phi _{\left( \lambda _{2},\delta _{2}\right) }\circ i_{A_{\lambda
_{2}}\otimes _{\min }B_{\delta _{2}}}\left( \pi _{\lambda _{1}\lambda
_{2}}^{A}\left( a\right) \otimes \pi _{\delta _{1}\delta _{2}}^{B}\left(
b\right) \right) \\
&=&\Phi _{\left( \lambda _{2},\delta _{2}\right) }\circ \pi _{\left( \lambda
_{1},\delta _{1}\right) \left( \lambda _{2},\delta _{2}\right) }^{\left(
A\times _{X}\mathbb{Z}\right) \otimes _{\min }B}\left( i_{A_{\lambda
_{1}}\otimes _{\min }B_{\delta _{1}}}\left( a\otimes b\right) \right)
\end{eqnarray*}%
for all $a\in A_{\lambda _{1}},b\in \ B_{\delta _{1}}$ and for all $\left(
\lambda _{1},\delta _{1}\right) ,\left( \lambda _{2},\delta _{2}\right) \in
\Lambda \times \Delta $ with $\left( \lambda _{1},\delta _{1}\right) \geq
\left( \lambda _{2},\delta _{2}\right) $, and taking into account that $%
\left( A_{\lambda }\otimes _{\min }B_{\delta }\right) \times _{X_{\lambda
}\otimes _{\min }B_{\delta }}\mathbb{Z}$ is generated by $i_{X_{\lambda
}\otimes _{\min }B_{\delta }}\left( z\right) $ and $i_{A_{\lambda }\otimes
_{\min }B_{\delta }}\left( a\otimes b\right) $ with $z\in X_{\lambda
}\otimes _{\min }B_{\delta }$ and $a\in A_{\lambda },b\in \ B_{\delta }$, we
deduce that $\left( \Phi _{\left( \lambda ,\delta \right) }\right) _{\left(
\lambda ,\delta \right) }$ is an inverse system of $C^{\ast }$-isomorphisms.
Then $\Phi =\lim\limits_{\leftarrow \left( \lambda ,\delta \right) }\Phi
_{\left( \lambda ,\delta \right) }$ is a pro-$C^{\ast }$-isomorphism from $%
\left( A\otimes _{\min }B\right) \times _{X\otimes _{\min }B}\mathbb{Z}$ $\ $%
to $\left( A\times _{X}\mathbb{Z}\right) \otimes $ $_{\min }B$.
\end{proof}

Let $X$ and $Y$ be Hilbert pro-$C^{\ast }$-modules over $A$. A morphism $%
T:X\rightarrow Y$ of right modules is \textit{adjointable} if there is
another morphism of modules $T^{\ast }:Y\rightarrow X\ $such that $%
\left\langle Tx_{1},x_{2}\right\rangle _{A}=\left\langle x_{1},T^{\ast
}x_{2}\right\rangle _{A}$ for all $x_{1},x_{2}\in X$. The vector space $%
L_{A}(X,Y)\ $of all adjointable module morphisms from $X$ to $Y$ has a
structure of locally convex space under the topology given by the family of
seminorms $\{p_{\lambda ,L_{A}(X,Y)}\}_{\lambda \in \Lambda }$, where $%
p_{\lambda ,L_{A}(X,Y)}\left( T\right) =\sup \{p_{\lambda
}^{A}(Tx);p_{\lambda }^{A}\left( x\right) \leq 1\}$.

For $Y=X,$ $L_{A}(X)=L_{A}(X,X)$ is a pro-$C^{\ast }$-algebra with $\left(
L_{A}(X)\right) _{\lambda }=L_{A_{\lambda }}(X_{\lambda })$ for each $%
\lambda \in \Lambda $,

The vector space $L_{A}(X,Y)$ of all adjointable module map from $X$ to $Y$
has a natural structure of Hilbert $L_{A}(Y)-L_{A}(X)$ pro-$C^{\ast }$%
-bimodule with the bimodule structure given by 
\begin{equation*}
S\cdot T=S\circ T\text{ and }T\cdot R=T\circ R\ 
\end{equation*}%
for\ all\ $T\in L_{A}(X,Y),S\in L_{A}(Y)\ $and\ $R\in L_{A}(X)\ $and the
inner products given by 
\begin{equation*}
_{L_{A}(Y)}\left\langle T_{1},T_{2}\right\rangle =T_{1}\circ T_{2}^{\ast }%
\text{ and }\left\langle T_{1},T_{2}\right\rangle _{L_{A}(X)}=T_{1}^{\ast
}\circ T_{2}
\end{equation*}%
for all $T_{1},T_{2}\in L_{A}(X,Y)$.

Suppose that $X$ is a full Hilbert $A-A$ pro-$C^{\ast }$-bimodule. Then $%
L_{A}(X)$ can be identified with $M(A)$, the multiplier algebra of $A$, and $%
L_{A}(A,X),$ has a structure of Hilbert $M(A)-M(A)$ pro-$C^{\ast }$%
-bimodule. $L_{A}(A,X)$ is denoted by $M(X)\ $and it is called the\textit{\
multiplier bimodule} of $X\ $(see \cite{JMZ}). Moreover, $X$ can be regarded
as a pro-$C^{\ast }$-sub-bimodule of $M(X)$, and $M(X)_{\lambda
}=M(X_{\lambda })$ for all $\lambda \in \Lambda $.

A morphism of Hilbert pro-$C^{\ast }$-bimodules\textit{\ }$\left( \Phi
,\varphi \right) :\left( X,A\right) \rightarrow \left( M(Y),M(B)\right) $ is 
\textit{nondegenerate if } the pro-$C^{\ast }$-subalgebra generated by $%
\{\varphi \left( a\right) b;a\in A,b\in B\}$ is dense in $B$ and vector
space generated by $\{\Phi \left( x\right) b;x\in X,b\in B\}\ $is dense in $%
Y $. A nondegenerate morphism of Hilbert pro-$C^{\ast }$-bimodules\textit{\ }%
$\left( \Phi ,\varphi \right) :\left( X,A\right) \rightarrow \left(
M(Y),M(B)\right) $ extends to a unique morphism of Hilbert pro-$C^{\ast }$%
-bimodules $\left( \overline{\Phi },\overline{\varphi }\right) :\left(
M(X),M(A)\right) \rightarrow \left( M(Y),M(B)\right) $. The canonical
representation $\left( i_{X},i_{A}\right) $ of $\left( X,A\right) $ on $%
A\times _{X}\mathbb{Z}$ is nondegenerate.

\begin{lemma}
\label{aj}Let $\left( X,A\right) $ be a full Hilbert pro-$C^{\ast }$%
-bimodule and let $B$ be a pro-$C^{\ast }$-algebra. Then there is a
nondegenerate morphism of Hilbert pro-$C^{\ast }$-bimodules\ \ \ $\left(
\rho _{X\otimes _{\max }B}^{X},\rho _{A\otimes _{\max }B}^{A}\right) :\left(
X,A\right) \rightarrow \left( M(X\otimes _{\max }B),M\left( A\otimes _{\max
}B\right) \right) \ \ \ \ \ \ $ such \ \ that\ \ \ $\ \ \ \rho _{X\otimes
_{\max }B}^{X}\left( x\right) \left( a\otimes b\right) =xa\otimes b\ $for
all $x\in X,a\in A$ and $b\in B$ and $\rho _{A\otimes _{\max }B}^{A}\left(
c\right) \left( a\otimes b\right) =ca\otimes b\ $for all $a,c\in A$ and $%
b\in B$.
\end{lemma}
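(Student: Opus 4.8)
The plan is to exhibit the two components explicitly on elementary tensors and then check, in turn, that they land in the correct multiplier bimodules, that together they constitute a morphism, and that this morphism is nondegenerate. Before anything else I would record that $X\otimes_{\max}B$ is a \emph{full} Hilbert $A\otimes_{\max}B$-bimodule: since $X$ is full and the products $b_{1}^{\ast}b_{2}$ are dense in $B$, the right inner products $\langle x_{1},x_{2}\rangle_{A}\otimes b_{1}^{\ast}b_{2}$ generate a dense pro-$C^{\ast}$-subalgebra of $A\otimes_{\max}B$, and symmetrically, using ${}_{A}\langle x_{1},x_{2}\rangle\otimes b_{1}b_{2}^{\ast}$, on the left. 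Fullness is exactly what makes the ambient objects $M(A\otimes_{\max}B)=L_{A\otimes_{\max}B}(A\otimes_{\max}B)$ and $M(X\otimes_{\max}B)=L_{A\otimes_{\max}B}(A\otimes_{\max}B,X\otimes_{\max}B)$ available, together with the identification $L_{A\otimes_{\max}B}(X\otimes_{\max}B)\cong M(A\otimes_{\max}B)$.

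First I would define $\rho_{A\otimes_{\max}B}^{A}(c)$ to be the operator $T_{c}$ on $A\otimes_{\max}B$ determined by $T_{c}(a\otimes b)=ca\otimes b$, and $\rho_{X\otimes_{\max}B}^{X}(x)$ to be the operator $S_{x}\colon A\otimes_{\max}B\to X\otimes_{\max}B$ with $S_{x}(a\otimes b)=xa\otimes b$. A short computation with $\langle a_{1}\otimes b_{1},a_{2}\otimes b_{2}\rangle=a_{1}^{\ast}a_{2}\otimes b_{1}^{\ast}b_{2}$ shows $T_{c}$ is adjointable with $T_{c}^{\ast}=T_{c^{\ast}}$, so $T_{c}\in M(A\otimes_{\max}B)$; likewise $S_{x}$ is adjointable with $S_{x}^{\ast}(y\otimes b)=\langle x,y\rangle_{A}\otimes b$, so $S_{x}\in M(X\otimes_{\max}B)$. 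Continuity with respect to each seminorm $p_{(\lambda,\delta)}$ I would verify by passing to the quotient at level $(\lambda,\delta)$, where $T_{c}$ and $S_{x}$ induce the corresponding bounded adjointable maps on $A_{\lambda}\otimes_{\max}B_{\delta}$ and $X_{\lambda}\otimes_{\max}B_{\delta}$.

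Next I would verify the four morphism identities for $\Phi=\rho_{X\otimes_{\max}B}^{X}$ and $\varphi=\rho_{A\otimes_{\max}B}^{A}$, using the multiplier-bimodule inner products: the right, $M(A\otimes_{\max}B)$-valued one $\langle T_{1},T_{2}\rangle=T_{1}^{\ast}\circ T_{2}$ and the left one $T_{1}\circ T_{2}^{\ast}$. The right-inner-product identity reduces to $S_{x}^{\ast}\circ S_{y}=T_{\langle x,y\rangle_{A}}=\varphi(\langle x,y\rangle_{A})$, which follows from $\langle x,ya\rangle_{A}=\langle x,y\rangle_{A}a$; the left-inner-product identity reduces to $S_{x}\circ S_{y}^{\ast}$ being left multiplication by ${}_{A}\langle x,y\rangle$ on $X\otimes_{\max}B$, which follows from the imprimitivity identity ${}_{A}\langle x,y\rangle z=x\langle y,z\rangle_{A}$ holding in any Hilbert bimodule. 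The two module identities $\Phi(xa)=\Phi(x)\varphi(a)$ and $\Phi(ax)=\varphi(a)\Phi(x)$ are immediate from $S_{xa}=S_{x}\circ T_{a}$ and $S_{ax}=\varphi(a)\circ S_{x}$, i.e. from associativity of composition.

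Finally, for nondegeneracy I would observe that $\{\varphi(c)(a\otimes b)\}=\{ca\otimes b:c,a\in A,\ b\in B\}$ has dense linear span in $A\otimes_{\max}B$ because $A^{2}$ is dense in $A$ (approximate units at each level), so it generates a dense pro-$C^{\ast}$-subalgebra, and that $\{\Phi(x)(a\otimes b)\}=\{xa\otimes b:x\in X,\ a\in A,\ b\in B\}$ has dense span in $X\otimes_{\max}B$ because $XA$ is dense in $X$. I expect the main obstacle to be the left-inner-product condition, specifically making precise the identification $L_{A\otimes_{\max}B}(X\otimes_{\max}B)\cong M(A\otimes_{\max}B)$ under which ${}_{A}\langle x,y\rangle$ acts by $z\otimes b\mapsto {}_{A}\langle x,y\rangle z\otimes b$; this is exactly where fullness of $X\otimes_{\max}B$ enters, and everything else is a direct check on elementary tensors.
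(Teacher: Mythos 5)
Your proposal follows essentially the same route as the paper's proof: define $\rho _{X\otimes _{\max }B}^{X}\left( x\right) $ and $\rho _{A\otimes _{\max }B}^{A}\left( c\right) $ on elementary tensors, verify adjointability and the morphism identities, obtain nondegeneracy from the density of $AA$ in $A$ and of $XA$ in $X$, and reduce the pro-$C^{\ast }$ case to the $C^{\ast }$ case by passing to the quotients $A_{\lambda }\otimes _{\max }B_{\delta }$ and taking inverse limits. The one substantive computation the paper writes out and you defer --- which you must supply where you assert that $S_{x}$ ``induces a bounded adjointable map'' --- is the estimate
$\bigl\Vert \tsum_{i}xa_{i}\otimes b_{i}\bigr\Vert ^{2}=\bigl\Vert
\tsum_{i,j}a_{i}^{\ast }\left\langle x,x\right\rangle _{A}a_{j}\otimes
b_{i}^{\ast }b_{j}\bigr\Vert \leq \left\Vert x\right\Vert ^{2}\bigl\Vert
\tsum_{i}a_{i}\otimes b_{i}\bigr\Vert ^{2}$ (and its analogue for $S_{x}^{\ast }$), which rests on the positivity of the matrix $\bigl[a_{i}^{\ast }\bigl(\left\Vert x\right\Vert ^{2}-\left\langle x,x\right\rangle _{A}\bigr)a_{j}\bigr]_{i,j}$ being preserved in $A\otimes _{\max }B$.
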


\begin{proof}
We divide the proof into two parts.

\textit{Step 1.} Suppose that $A$ and $B$ are $C^{\ast }$-algebras.

Let $x\in X$. Consider the linear maps $\rho _{X\otimes _{\max }B}^{X}\left(
x\right) :A\otimes _{\text{alg}}B\rightarrow X\otimes _{\text{alg}}B$ given
by $\rho _{X\otimes _{\max }B}^{X}\left( x\right) \left( a\otimes b\right)
=xa\otimes b,$ and $\rho _{X\otimes _{\max }B}^{X}\left( x\right) ^{\ast
}:X\otimes _{\text{alg}}B\rightarrow A\otimes _{\text{alg}}B$ given by $\rho
_{X\otimes _{\max }B}^{X}\left( x\right) ^{\ast }\left( y\otimes b\right)
=\left\langle x,y\right\rangle _{A}\otimes b$. From

\begin{eqnarray*}
&&\left\Vert \rho _{X\otimes _{\max }B}^{X}\left( x\right) \left(
\tsum\limits_{i=1}^{n}a_{i}\otimes b_{i}\right) \right\Vert _{X\otimes
_{\max }B}^{2}=\left\Vert \tsum\limits_{i,j=1}^{n}a_{i}^{\ast }\left\langle
x,x\right\rangle _{A}a_{j}\otimes b_{i}^{\ast }b_{j}\right\Vert _{A\otimes
_{\max }B} \\
&\leq &\left\Vert x\right\Vert _{X}^{2}\left\Vert
\tsum\limits_{i,j=1}^{n}a_{i}^{\ast }a_{j}\otimes b_{i}^{\ast
}b_{j}\right\Vert _{A\otimes _{\max }B}=\left\Vert x\right\Vert
_{X}^{2}\left\Vert \tsum\limits_{i,j=1}^{n}a_{i}\otimes b_{i}\right\Vert
_{A\otimes _{\max }B}^{2}
\end{eqnarray*}%
for all $a_{1},...,a_{n}\in A$ and $b_{1},...,b_{n}\in B$, we deduce that $%
\rho _{X\otimes _{\max }B}^{X}\left( x\right) $ extends to a continuous
linear map from $A\otimes _{\max }B$ to $X\otimes _{\max }B$. From 
\begin{eqnarray*}
&&\left\Vert \rho _{X\otimes _{\max }B}^{X}\left( x\right) ^{\ast }\left(
\tsum\limits_{i=1}^{n}x_{i}\otimes b_{i}\right) \right\Vert _{A\otimes
_{\max }B}^{2}=\left\Vert \tsum\limits_{i,j=1}^{n}\left\langle
x_{i},x\right\rangle _{A}\left\langle x,x_{j}\right\rangle _{A}\otimes
b_{i}^{\ast }b_{j}\right\Vert _{A\otimes _{\max }B} \\
&\leq &\left\Vert x\right\Vert _{X}^{2}\left\Vert
\tsum\limits_{i,j=1}^{n}\left\langle x_{i},x_{j}\right\rangle _{A}\otimes
b_{i}^{\ast }b_{j}\right\Vert _{A\otimes _{\max }B}=\left\Vert x\right\Vert
_{X}^{2}\left\Vert \tsum\limits_{i,j=1}^{n}x_{i}\otimes b_{i}\right\Vert
_{X\otimes _{\max }B}^{2}
\end{eqnarray*}%
for all $a_{1},...,a_{n}\in A$ and $x_{1},...,x_{n}\in X$, we deduce that $%
\rho _{X\otimes _{\max }B}^{X}\left( x\right) ^{\ast }$ extends to a
continuous linear map from $X\otimes _{\max }B$ to$A\otimes _{\max }B$.
Since 
\begin{eqnarray*}
&&\left\langle \rho _{X\otimes _{\max }B}^{X}\left( x\right) \left(
\tsum\limits_{i=1}^{n}a_{i}\otimes b_{i}\right)
,\tsum\limits_{j=1}^{m}x_{j}\otimes c_{j}\right\rangle _{A\otimes _{\max }B}
\\
&=&\tsum\limits_{i=1}^{n}\tsum\limits_{j=1}^{m}\left\langle xa_{i}\otimes
b_{i},x_{j}\otimes c_{j}\right\rangle _{A\otimes _{\max
}B}=\tsum\limits_{i=1}^{n}\tsum\limits_{j=1}^{m}a_{i}^{\ast }\left\langle
x,x_{j}\right\rangle _{A}b_{i}^{\ast }c_{j} \\
&=&\left\langle \tsum\limits_{i=1}^{n}a_{i}\otimes
b_{i},\tsum\limits_{j=1}^{m}\left\langle x,x_{j}\right\rangle _{A}\otimes
c_{j}\right\rangle _{A\otimes _{\max }B} \\
&=&\left\langle \tsum\limits_{i=1}^{n}a_{i}\otimes b_{i},\rho _{X\otimes
_{\max }B}^{X}\left( x\right) ^{\ast }\left(
\tsum\limits_{j=1}^{m}x_{j}\otimes c_{j}\right) \right\rangle _{A\otimes
_{\max }B}
\end{eqnarray*}%
for all $a_{1},...,a_{n}\in A,$ $b_{1},...,b_{m},c_{1},...,c_{m}\in B$ and $%
x_{1},...,x_{m}\in X,$ we have $\rho _{X\otimes _{\max }B}^{X}\left(
x\right) $ $\in M(X\otimes _{\max }B)$. Let $\rho _{A\otimes _{\max }B}^{A}$
be the inclusion of $A$ into $M(A\otimes _{\max }B)\ $(see, for example, 
\cite[Theorem B27]{RW}). Then $\rho _{A\otimes _{\max }B}^{A}\left( c\right)
\left( a\otimes b\right) =ca\otimes b$ for all $a,c\in A,$ $b\in B$, and it
is easy to check that $\left( \rho _{X\otimes _{\max }B}^{X},\rho _{A\otimes
_{\max }B}^{A}\right) $ is a morphism of Hilbert $C^{\ast }$-bimodules.
Moreover, $\left( \rho _{X\otimes _{\max }B}^{X},\rho _{A\otimes _{\max
}B}^{A}\right) $ is nondegenerate.

\textit{Step 2.} Suppose that $A$ and $B$ are pro-$C^{\ast }$-algebras. Let $%
\left( \lambda ,\delta \right) \in \Lambda \times \Delta $ and $\left( \rho
_{X_{\lambda }\otimes _{\max }B_{\delta }}^{X_{\lambda }},\rho _{A_{\lambda
}\otimes _{\max }B_{\delta }}^{A_{\lambda }}\right) :\left( X_{\lambda
},A_{\lambda }\right) \rightarrow \left( M(X_{\lambda }\otimes _{\max
}B_{\delta }),M\left( A_{\lambda }\otimes _{\max }B_{\delta }\right) \right) 
$ be the nondegenerate morphism of Hilbert $C^{\ast }$-bimodules constructed
in \textit{Step1.} It is easy to check that $\left( \rho _{X_{\lambda
}\otimes _{\max }B_{\delta }}^{X_{\lambda }}\right) _{\left( \lambda ,\delta
\right) \in \Lambda \times \Delta }$ and $\left( \rho _{A_{\lambda }\otimes
_{\max }B_{\delta }}^{A_{\lambda }}\right) _{\left( \lambda ,\delta \right)
\in \Lambda \times \Delta }$ are inverse systems of linear maps,
respectively $C^{\ast }$-morphisms, and $\left( \lim\limits_{\leftarrow
\left( \lambda ,\delta \right) }\rho _{X_{\lambda }\otimes _{\max }B_{\delta
}}^{X_{\lambda }},\lim\limits_{\leftarrow \left( \lambda ,\delta \right)
}\rho _{A_{\lambda }\otimes _{\max }B_{\delta }}^{A_{\lambda }}\right)
:\left( \lim\limits_{\leftarrow \lambda }X_{\lambda
},\lim\limits_{\leftarrow \lambda }A_{\lambda }\right) \rightarrow \left(
\lim\limits_{\leftarrow \left( \lambda ,\delta \right) }X_{\lambda }\otimes
_{\max }B_{\delta },\lim\limits_{\leftarrow \left( \lambda ,\delta \right)
}A_{\lambda }\otimes _{\max }B_{\delta }\right) $ is a morphism of Hilbert
pro-$C^{\ast }$-bimodules. \ Identifying\ \ \ $\left(
\lim\limits_{\leftarrow \left( \lambda ,\delta \right) }X_{\lambda }\otimes
_{\max }B_{\delta },\lim\limits_{\leftarrow \left( \lambda ,\delta \right)
}A_{\lambda }\otimes _{\max }B_{\delta }\right) $\ \ \ with $\left( X\otimes
_{\max }B,A\otimes _{\max }B\right) \ $and $\left( \lim\limits_{\leftarrow
\lambda }X_{\lambda },\lim\limits_{\leftarrow \lambda }A_{\lambda }\right) $
with $\left( X,A\right) $, we obtain a nondegenerate morphism of Hilbert pro-%
$C^{\ast }$-bimodules $\left( \rho _{X\otimes _{\max }B}^{X},\rho _{A\otimes
_{\max }B}^{A}\right) :\left( X,A\right) \rightarrow (M(X\otimes _{\max }B),$
\ $M\left( A\otimes _{\max }B\right) )$\ \ such that $\rho _{X\otimes _{\max
}B}^{X}\left( x\right) \left( a\otimes b\right) =xa\otimes b\ $for all $x\in
X,a\in A$ and $b\in B$ and $\rho _{A\otimes _{\max }B}^{A}\left( c\right)
\left( a\otimes b\right) =ca\otimes b\ $for all $a,c\in A$ and $b\in B$.
\end{proof}

\begin{theorem}
\label{tensor1}Let $\left( X,A\right) $ be a full Hilbert pro-$C^{\ast }$%
-bimodule and let $B$ be a pro-$C^{\ast }$-algebra. Then the pro-$C^{\ast }$%
-algebras $\left( A\otimes _{\max }B\right) \times _{X\otimes _{\max }B}%
\mathbb{Z}$ and $\left( A\times _{X}\mathbb{Z}\right) \otimes _{\max }$ $B$
are isomorphic.
\end{theorem}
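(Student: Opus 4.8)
The plan is to follow the two-step scheme of Theorem \ref{tensor}, proving the statement first when $A$ and $B$ are $C^{\ast }$-algebras and then passing to the general pro-$C^{\ast }$ case by an inverse-limit argument. In \emph{Step 1}, exactly as in Theorem \ref{tensor}, the pair $(i_{X}\otimes _{\max }\text{id}_{B},i_{A}\otimes _{\max }\text{id}_{B})$ obtained by tensoring the canonical covariant representation $(i_{X},i_{A})$ of $(X,A)$ on $A\times _{X}\mathbb{Z}$ with the identity morphism of $(B,B)$ is a covariant representation of $(X\otimes _{\max }B,A\otimes _{\max }B)$ on $(A\times _{X}\mathbb{Z})\otimes _{\max }B$. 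By the universal property of the crossed product it induces a $C^{\ast }$-morphism $\Phi :(A\otimes _{\max }B)\times _{X\otimes _{\max }B}\mathbb{Z}\rightarrow (A\times _{X}\mathbb{Z})\otimes _{\max }B$ satisfying $\Phi \circ i_{X\otimes _{\max }B}=i_{X}\otimes _{\max }\text{id}_{B}$ and $\Phi \circ i_{A\otimes _{\max }B}=i_{A}\otimes _{\max }\text{id}_{B}$; since its range contains the generators $i_{X}(X)\otimes _{\text{alg}}B$ and $i_{A}(A)\otimes _{\text{alg}}B$, it is surjective, and the same computation as in Theorem \ref{tensor} shows that it intertwines the dual actions $\delta ^{X\otimes _{\max }B}$ and $\delta ^{X}\otimes \text{id}$.

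The hard part is injectivity. In Theorem \ref{tensor} this was immediate because the minimal tensor product preserves injectivity of $i_{A}$; for $\otimes _{\max }$ this fails, and it is precisely here that the fullness of $(X,A)$ enters, through Lemma \ref{aj}. The plan is to construct an inverse. Writing $D=(A\otimes _{\max }B)\times _{X\otimes _{\max }B}\mathbb{Z}$, Lemma \ref{aj} provides a nondegenerate morphism $(\rho _{X\otimes _{\max }B}^{X},\rho _{A\otimes _{\max }B}^{A}):(X,A)\rightarrow (M(X\otimes _{\max }B),M(A\otimes _{\max }B))$; composing it with the extension to multipliers of the nondegenerate canonical representation $(i_{X\otimes _{\max }B},i_{A\otimes _{\max }B})$ of $(X\otimes _{\max }B,A\otimes _{\max }B)$ on $D$ yields a covariant representation of $(X,A)$ on $M(D)$, and the universal property of $A\times _{X}\mathbb{Z}$ then produces a morphism $\psi _{1}:A\times _{X}\mathbb{Z}\rightarrow M(D)$. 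Together with the canonical morphism $\psi _{2}:B\rightarrow M(D)$ coming from $B\hookrightarrow M(A\otimes _{\max }B)$ followed by $\overline{i_{A\otimes _{\max }B}}$, whose range commutes with that of $\psi _{1}$, the universal property of the maximal tensor product delivers a morphism $\Psi :(A\times _{X}\mathbb{Z})\otimes _{\max }B\rightarrow D$. Evaluating $\Psi \circ \Phi $ and $\Phi \circ \Psi $ on the generators $i_{A\otimes _{\max }B}(a\otimes b)$, $i_{X\otimes _{\max }B}(x\otimes b)$ and $i_{A}(a)\otimes b$, $i_{X}(x)\otimes b$ shows that they are mutually inverse, so $\Phi $ is a $C^{\ast }$-isomorphism. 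The points demanding care are that the range of $\psi _{1}$ commutes with that of $\psi _{2}$ and that $\Psi $ actually takes values in $D$ rather than merely in $M(D)$; this is exactly where the explicit formulas of Lemma \ref{aj} and the nondegeneracy of the canonical representation are needed.

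In \emph{Step 2} one passes to pro-$C^{\ast }$-algebras exactly as in Theorem \ref{tensor}: using \cite[Proposition 3.8]{JZ}, the identification $(X\otimes _{\max }B)_{(\lambda ,\delta )}\cong X_{\lambda }\otimes _{\max }B_{\delta }$, and the Arens--Michael decomposition of the maximal tensor product, both sides are written as inverse limits, $(A\times _{X}\mathbb{Z})\otimes _{\max }B=\lim\limits_{\leftarrow (\lambda ,\delta )}(A_{\lambda }\times _{X_{\lambda }}\mathbb{Z})\otimes _{\max }B_{\delta }$ and $(A\otimes _{\max }B)\times _{X\otimes _{\max }B}\mathbb{Z}=\lim\limits_{\leftarrow (\lambda ,\delta )}(A_{\lambda }\otimes _{\max }B_{\delta })\times _{X_{\lambda }\otimes _{\max }B_{\delta }}\mathbb{Z}$. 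Step 1 gives a $C^{\ast }$-isomorphism $\Phi _{(\lambda ,\delta )}$ at each level, and the computation with the connecting maps $\pi _{\lambda \mu }^{A\times _{X}\mathbb{Z}}\otimes \pi _{\delta \varepsilon }^{B}$ carried out in Theorem \ref{tensor} shows that $(\Phi _{(\lambda ,\delta )})_{(\lambda ,\delta )}$ is an inverse system of $C^{\ast }$-isomorphisms; hence $\Phi =\lim\limits_{\leftarrow (\lambda ,\delta )}\Phi _{(\lambda ,\delta )}$ is the required pro-$C^{\ast }$-isomorphism.
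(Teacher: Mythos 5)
Your proposal is correct and follows essentially the same route as the paper: the same surjection $\Phi$ from the universal property, the same inverse $\Psi$ assembled from Lemma \ref{aj}, the multiplier extensions $\Psi _{1},\Psi _{2}$ with commuting ranges, and the same inverse-limit reduction in Step 2. The only (cosmetic) differences are that the paper does not invoke the dual action in the maximal case and concludes from $\Psi \circ \Phi =\mathrm{id}$ together with surjectivity of $\Phi$ rather than checking both composites.
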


\begin{proof}
\textit{Step 1.} Suppose that $A$\ and $B$\ are $C^{\ast }$-algebras.

Since\textbf{\ }$\left( i_{X}\otimes _{\max }\text{id}_{B}\text{,}%
i_{A}\otimes _{\max }\text{id}_{B}\right) $\textbf{\ }$\ $is a covariant
representation of $\ (X\otimes _{\max }B,$\ $A\otimes _{\max }B)$\ on $%
\left( A\times _{X}\mathbb{Z}\right) \otimes _{\max }B$\textbf{, \ }there \
is a unique $C^{\ast }$-morphism $\Phi :\left( A\otimes _{\max }B\right) $\ $%
\times _{X\otimes _{\max }B}Z\rightarrow $\ $\left( A\times _{X}\mathbb{Z}%
\right) \otimes _{\max }$\ $B$\ such that $\Phi \circ i_{X\otimes _{\max
}B}=i_{X}\otimes _{\max }$id$_{B}$\ and $\Phi \circ i_{A\otimes _{\max
}B}=i_{A}\otimes _{\max }$id$_{B}$. Moreover, $\Phi $\ is surjective.

Let $\rho _{A\otimes _{\max }B}^{A}$ and $\rho _{A\otimes _{\max }B}^{B}$
the canonical inclusions of $A$ and $B$ in $M\left( A\otimes _{\max
}B\right) $ and $\rho _{X\otimes _{\max }B}^{X}:X\rightarrow M\left(
X\otimes _{\max }B\right) $ the map constructed in Lemma \ref{aj}. Then $%
\left( \overline{i_{i_{X\otimes _{\max }B}}}\circ \rho _{X\otimes _{\max
}B}^{X},\overline{i_{A\otimes _{\max }B}}\circ \rho _{A\otimes _{\max
}B}^{A}\right) $ is a covariant representation of $\left( X,A\right) $ on $%
M\left( \left( A\otimes _{\max }B\right) \times _{X\otimes _{\max }B}\mathbb{%
Z}\right) ,$ and by the universal property, there is a unique $C^{\ast \ }$%
-morphism $\Psi _{1}:A\times _{X}\mathbb{Z\rightarrow }M\left( \left(
A\otimes _{\max }B\right) \times _{X\otimes _{\max }B}\mathbb{Z}\right) $
such that $\Psi _{1}\circ i_{X}=\overline{i_{X\otimes _{\max }B}}\circ \rho
_{X\otimes _{\max }B}^{X}$ and $\Psi _{1}\circ i_{A}=\overline{i_{A\otimes
_{\max }B}}\circ \rho _{A\otimes _{\max }B}^{A}$.

Let $\Psi _{2}=\overline{i_{A\otimes _{\max }B}}\circ \rho _{A\otimes _{\max
}B}^{B}:B\rightarrow M\left( \left( A\otimes _{\max }B\right) \times
_{X\otimes _{\max }B}\mathbb{Z}\right) $. Since

\begin{eqnarray*}
&&\Psi _{1}\left( i_{A}\left( a\right) \right) \Psi _{2}\left( b\right) =%
\overline{i_{A\otimes _{\max }B}}\left( \rho _{A\otimes _{\max }B}^{A}\left(
a\right) \rho _{A\otimes _{\max }B}^{B}\left( b\right) \right) =i_{A\otimes
_{\max }B}\left( a\otimes b\right) \\
&=&\overline{i_{A\otimes _{\max }B}}\left( \rho _{A\otimes _{\max
}B}^{B}\left( b\right) \rho _{A\otimes _{\max }B}^{A}\left( a\right) \right)
=\Psi _{2}\left( b\right) \Psi _{1}\left( i_{A}\left( a\right) \right)
\end{eqnarray*}
for all $a\in A,b\in B$ and%
\begin{eqnarray*}
&&\Psi _{1}\left( i_{X}\left( x\right) \right) \Psi _{2}\left( b\right) =%
\overline{i_{X\otimes _{\max }B}}\left( \rho _{X\otimes _{\max }B}^{X}\left(
x\right) \right) \overline{i_{A\otimes _{\max }B}}\left( \rho _{A\otimes
_{\max }B}^{B}\left( b\right) \right) \\
&=&\overline{i_{X\otimes _{\max }B}}\left( \rho _{X\otimes _{\max
}B}^{X}\left( x\right) \rho _{A\otimes _{\max }B}^{B}\left( b\right) \right)
=i_{X\otimes _{\max }B}\left( x\otimes b\right) =\Psi _{2}\left( b\right)
\Psi _{1}\left( i_{X}\left( x\right) \right)
\end{eqnarray*}
for all $x\in X,b\in B$, and since $i_{A}\left( A\right) $ and $i_{X}\left(
X\right) $ generate $A\times _{X}\mathbb{Z}$, there is a unique $C^{\ast }$%
-morphism $\Psi :\left( A\times _{X}\mathbb{Z}\right) \otimes _{\max }B$ $%
\rightarrow M\left( \left( A\otimes _{\max }B\right) \times _{X\otimes
_{\max }B}\mathbb{Z}\right) $ such that $\Psi \left( c\otimes b\right) =\Psi
_{1}\left( c\right) \Psi _{2}\left( b\right) $ for all $c\in A\times _{X}%
\mathbb{Z}$ and $b\in B$. Moreover, $\Psi \left( \left( A\times _{X}\mathbb{Z%
}\right) \otimes _{\max }B\right) $ $\subseteq \left( A\otimes _{\max
}B\right) \times _{X\otimes _{\max }B}\mathbb{Z}.$

From 
\begin{equation*}
\Psi \circ \Phi \left( i_{X\otimes _{\max }B}\left( x\otimes b\right)
\right) =\Psi \left( i_{X}\left( x\right) \otimes b\right) =\Psi _{2}\left(
b\right) \Psi _{1}\left( i_{X}\left( x\right) \right) =i_{X\otimes _{\max
}B}\left( x\otimes b\right)
\end{equation*}%
and 
\begin{equation*}
\Psi \circ \Phi \left( i_{A\otimes _{\max }B}\left( a\otimes b\right)
\right) =\Psi \left( i_{A}\left( a\right) \otimes b\right) =\Psi _{2}\left(
b\right) \Psi _{1}\left( i_{A}\left( a\right) \right) =i_{A\otimes _{\max
}B}\left( a\otimes b\right)
\end{equation*}%
for all $a\in A,b\in B\ $and for all $x\in X$ and taking into account that $%
i_{X\otimes _{\max }B}\left( X\otimes _{\max }B\right) $ and $i_{A\otimes
_{\max }B}\left( A\otimes _{\max }B\right) $ generate $\left( A\otimes
_{\max }B\right) \times _{X\otimes _{\max }B}\mathbb{Z}$, we conclude $\Psi
\circ \Phi =$\ \ \ \ \ id$_{\left( A\otimes _{\max }B\right) \times
_{X\otimes _{\max }B}\mathbb{Z}}$,$\ $where using the fact that $\Phi \ $is
surjective, we deduce that $\Phi $ is a $C^{\ast }$-isomorphism.

\textit{Step 2. }The general case.\ Let $\left( \lambda ,\delta \right) \in
\Lambda \times \Delta $ and let $\Phi _{\left( \lambda ,\delta \right) }$ $:$
$\left( A_{\lambda }\otimes _{\max }B_{\delta }\right) \times _{X_{\lambda
}\otimes _{\max }B_{\delta }}Z\rightarrow $\textbf{\ }$\left( A_{\lambda
}\times _{X_{\lambda }}\mathbb{Z}\right) \otimes _{\max }B_{\delta }$\ be
the $C^{\ast }$-isomorphism constructed in \textit{Step 1. }Following the
proof of Theorem \ref{tensor} \textit{Step 2},\textit{\ } we obtain a pro-$%
C^{\ast }$-isomorphism $\Phi =\lim\limits_{\leftarrow \left( \lambda ,\delta
\right) }\Phi _{\left( \lambda ,\delta \right) }$ from $\left( A\otimes
_{\max }B\right) \times _{X\otimes _{\max }B}\mathbb{Z}$ to $\left( A\times
_{X}\mathbb{Z}\right) \otimes _{\max }$ $B$.
\end{proof}

\section{An application}

A pro-$C^{\ast }$-algebra $A$ is nuclear if for any pro-$C^{\ast }$-algebra $%
B,$ $A\otimes _{\min }B=A\otimes _{\max }B\ $\cite[p. 126]{P}.

\begin{remark}
If $X$ is a Hilbert $C^{\ast }$-bimodule over a nuclear pro-$C^{\ast }$%
-algebra $A$, then, clearly, $X\otimes _{\min }B=X\otimes _{\max }B$ for any
pro-$C^{\ast }$-algebra $B$, and $\left( A\otimes _{\min }B\right) \times
_{X\otimes _{\min }B}\mathbb{Z}$ $=$ $\left( A\otimes _{\max }B\right)
\times _{X\otimes _{\max }B}\mathbb{Z}.$
\end{remark}

Suppose that the topology on $A$ is given by the family of $C^{\ast }$%
-seminorms $\Gamma =\{p_{\lambda };\lambda \in \Lambda \}$. Then $A$ is
nuclear if only if the $C^{\ast }$-algebras $A_{\lambda },$ $\lambda \in
\Lambda $ are nuclear \cite[p. 126]{P}. Recall that a $C^{\ast }$-algebra $A$
is nuclear if and only if for any $C^{\ast }$-algebra $B$, the canonical map 
$k_{A,B}:A\otimes _{\max }B\rightarrow A\otimes _{\min }B,$ $k_{A,B}\left(
a\otimes b\right) =a\otimes b$ is a $C^{\ast }$-isomorphism.

\begin{remark}
\label{nuc} Let $(X,A)$ be a full Hilbert $C^{\ast }$-bimodule. If $A$ is
nuclear, then for any $C^{\ast }$-algebra $B$, the pair of maps $\left(
k_{X,B},k_{A,B}\right) $, where $k_{X,B}:X\otimes _{\max }B\rightarrow
X\otimes _{\min }B,$ $k_{X,B}\left( x\otimes b\right) =x\otimes b,$ is an
isomorphism of Hilbert $C^{\ast }$-bimodules and it induces a unique $%
C^{\ast }$-isomorphism $k:\left( A\otimes _{\max }B\right) \times _{X\otimes
_{\max }B}\mathbb{Z\rightarrow }\left( A\otimes _{\min }B\right) \times
_{X\otimes _{\min }B}\mathbb{Z}$ such that $k\circ i_{X\otimes _{\max
}B}=i_{X\otimes _{\min }B}\circ k_{X,B}$ and $k\circ i_{A\otimes _{\max
}B}=i_{A\otimes _{\min }B}\circ k_{A,B}.$
\end{remark}

\begin{proposition}
Let $(X,A)$ be a full Hilbert pro-$C^{\ast }$-bimodule. Then $A\times _{X}%
\mathbb{Z}$ is nuclear if and only if $A$ is nuclear.
\end{proposition}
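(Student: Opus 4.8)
The plan is to prove both implications separately, leveraging the tensor-product identities established in Theorems \ref{tensor} and \ref{tensor1} together with the Arens--Michael decomposition. The statement to prove is that for a full Hilbert pro-$C^{\ast}$-bimodule $(X,A)$, the crossed product $A\times_{X}\mathbb{Z}$ is nuclear if and only if $A$ is nuclear.

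For the forward direction, suppose $A\times_{X}\mathbb{Z}$ is nuclear. I would use the fact that $A$ embeds into $A\times_{X}\mathbb{Z}$ via the canonical nondegenerate representation $i_{A}$, with $i_{A}(A)$ appearing as the fixed-point (degree-zero) pro-$C^{\ast}$-subalgebra of the dual action $\delta^{X}$ of $\mathbb{T}$. Since nuclearity of a pro-$C^{\ast}$-algebra is characterized fibrewise through the Arens--Michael decomposition (the $C^{\ast}$-algebras $A_{\lambda}$ are all nuclear), and since $(A\times_{X}\mathbb{Z})_{\lambda}=A_{\lambda}\times_{X_{\lambda}}\mathbb{Z}$ by \cite[Proposition 3.8]{JZ}, this reduces to the $C^{\ast}$-algebraic statement: if $A_{\lambda}\times_{X_{\lambda}}\mathbb{Z}$ is nuclear then $A_{\lambda}$ is nuclear. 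This holds because $A_{\lambda}$ is the image of a conditional expectation (the averaging over the dual $\mathbb{T}$-action) and nuclearity passes to the range of a conditional expectation onto a $C^{\ast}$-subalgebra.

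For the reverse direction, suppose $A$ is nuclear. The plan is to verify the defining equality $\left(A\times_{X}\mathbb{Z}\right)\otimes_{\min}B=\left(A\times_{X}\mathbb{Z}\right)\otimes_{\max}B$ for every pro-$C^{\ast}$-algebra $B$ by chaining the two associativity theorems. Concretely, by Theorem \ref{tensor} we have $\left(A\times_{X}\mathbb{Z}\right)\otimes_{\min}B\cong\left(A\otimes_{\min}B\right)\times_{X\otimes_{\min}B}\mathbb{Z}$, and by Theorem \ref{tensor1} we have $\left(A\times_{X}\mathbb{Z}\right)\otimes_{\max}B\cong\left(A\otimes_{\max}B\right)\times_{X\otimes_{\max}B}\mathbb{Z}$. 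Since $A$ is nuclear, $A\otimes_{\min}B=A\otimes_{\max}B$, and by the preceding remarks $X\otimes_{\min}B=X\otimes_{\max}B$, so the two crossed products on the right-hand sides coincide (as already observed in the first remark of this section, invoking Remark \ref{nuc} fibrewise to identify the $C^{\ast}$-isomorphism $k$). Composing these identifications yields the desired equality of minimal and maximal tensor products, proving $A\times_{X}\mathbb{Z}$ is nuclear.

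The main obstacle I expect is the forward direction, specifically justifying that nuclearity descends from $A\times_{X}\mathbb{Z}$ to $A$. At the pro-$C^{\ast}$ level this must be argued fibrewise, so I would first record that $i_{A}$ identifies $A_{\lambda}$ with the fixed-point algebra of $\delta^{X}_{\lambda}$ inside $A_{\lambda}\times_{X_{\lambda}}\mathbb{Z}$, and then cite the standard fact that there is a faithful conditional expectation onto this fixed-point algebra (obtained by integrating $\delta^{X}$ over $\mathbb{T}$). The delicate point is ensuring the conditional-expectation and fixed-point structures are compatible with the inverse-limit decomposition; once the fibrewise statement is in place, the pro-$C^{\ast}$ conclusion follows immediately from the characterization of nuclearity via the $A_{\lambda}$.
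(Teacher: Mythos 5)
Your proof is correct, and the direction ``$A$ nuclear $\Rightarrow A\times _{X}\mathbb{Z}$ nuclear'' follows essentially the paper's own route: reduce to the $C^{\ast }$-case via $\left( A\times _{X}\mathbb{Z}\right) _{\lambda }=A_{\lambda }\times _{X_{\lambda }}\mathbb{Z}$, then chain Theorem \ref{tensor}, Remark \ref{nuc} and Theorem \ref{tensor1}; the only point you gloss over, which the paper checks explicitly, is that the composite isomorphism $\Phi \circ k\circ \Psi $ acts as $c\otimes b\mapsto c\otimes b$ on elementary tensors, so that it really is the canonical map $k_{A\times _{X}\mathbb{Z},B}$ and not merely some abstract isomorphism between the two completions. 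Where you genuinely diverge is the converse. The paper argues by contradiction entirely inside its own machinery: if $k_{A,B_{0}}$ is not injective, the induced morphism $k_{0}$ of crossed products is not injective (its restriction to $i_{A\otimes _{\max }B_{0}}(A\otimes _{\max }B_{0})$ is $i_{A\otimes _{\min }B_{0}}\circ k_{A,B_{0}}$), while nuclearity of $A\times _{X}\mathbb{Z}$ produces, via $\Phi ^{-1}\circ k_{A\times _{X}\mathbb{Z},B_{0}}\circ \Psi ^{-1}$, an isomorphism satisfying the same covariance identities, hence equal to $k_{0}$ by uniqueness --- a contradiction. You instead invoke the faithful conditional expectation onto the fixed-point algebra $i_{A}(A_{\lambda })$ of the dual $\mathbb{T}$-action (which is indeed $i_{A}(A_{\lambda })$ by \cite[Section 3]{AEE}) and the fact that nuclearity passes to the range of a conditional expectation. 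That fact is true but rests on the Choi--Effros--Connes characterization of nuclearity via injectivity of the double dual, so your argument is shorter and more conceptual but imports a much deeper external theorem; the paper's argument is longer but self-contained, using only the universal property and the injectivity criteria already established. One small remark: your worry about compatibility of the expectations with the inverse-limit structure is unnecessary --- since nuclearity of a pro-$C^{\ast }$-algebra is equivalent to nuclearity of every fiber $A_{\lambda }$, the $C^{\ast }$-level statement applied fiberwise suffices, with no coherence across the index set required.
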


\begin{proof}
According to \cite[Proposition 3.8]{JZ}, to prove this proposition it is
sufficient to show that if $(X,A)$ is a full Hilbert $C^{\ast }$-bimodule,
then $A\times _{X}\mathbb{Z}$ is nuclear if and only if $A$ is nuclear.

If $A$ is nuclear and $B$ a $C^{\ast }$-algebra, then $\Phi \circ k\circ
\Psi $, where $\Psi $ is the $C^{\ast }$-isomorphism from $\left( A\times
_{X}\mathbb{Z}\right) \otimes _{\max }B$ onto $\left( A\otimes _{\max
}B\right) \times _{X\otimes _{\max }B}\mathbb{Z}$ constructed in the proof
of Theorem \ref{tensor1}, $k$ is the $C^{\ast }$-isomorphism from $\left(
A\otimes _{\max }B\right) \times _{X\otimes _{\max }B}\mathbb{Z}$ onto $%
\left( A\otimes _{\min }B\right) $\ $\times _{X\otimes _{\min }B}\mathbb{Z}$
(Remark \ref{nuc}), and $\Phi $ is the $C^{\ast }$-isomorphism from $\left(
A\times _{X}\mathbb{Z}\right) \otimes _{\min }$ $B$ onto $\left( A\otimes
_{\min }B\right) \times _{X\otimes _{\min }B}\mathbb{Z}$ constructed in the
proof of Theorem \ref{tensor}, is a $C^{\ast }$-isomorphism. Moreover, $%
\left( \Phi \circ k\circ \Psi \right) \left( c\otimes b\right) =c\otimes b$
for all $c\in A\times _{X}\mathbb{Z}$ and for all $b\in B$. These show that, 
$A\times _{X}\mathbb{Z}$ is nuclear.

Conversely, suppose that $A\times _{X}\mathbb{Z}$ is nuclear. If $A\ $is not
nuclear, there is a $C^{\ast }$-algebra $B_{0}$ such that the canonical map $%
k_{A,B_{0}}:A\otimes _{\max }B_{0}\rightarrow A\otimes _{\min }B_{0}$ is not
injective. Then the $C^{\ast }$-morphism $k_{0}:\left( A\otimes _{\max
}B_{0}\right) \times _{X\otimes _{\max }B_{0}}\mathbb{Z\rightarrow }\left(
A\otimes _{\min }B_{0}\right) \times _{X\otimes _{\min }B_{0}}\mathbb{Z}$
induced by $\left( k_{X,B_{0}},k_{A,B_{0}}\right) $ is not injective.

On the other hand, because $A\times _{X}\mathbb{Z}$ is nuclear, the
canonical map $k_{A\times _{X}\mathbb{Z},B_{0}}:\left( A\times _{X}\mathbb{Z}%
\right) \otimes _{\max }B_{0}\rightarrow \left( A\times _{X}\mathbb{Z}%
\right) \otimes _{\min }$ $B_{0}\ $is a $C^{\ast }$-isomorphism, and then $%
\widetilde{k}=\Phi ^{-1}\circ k_{A\times _{X}\mathbb{Z},B_{0}}\circ \Psi
^{-1}\ $is a $C^{\ast }$-isomorphism from $\left( A\otimes _{\max
}B_{0}\right) \times _{X\otimes _{\max }B_{0}}\mathbb{Z}$ onto $\left(
A\otimes _{\min }B_{0}\right) \times _{X\otimes _{\min }B_{0}}\mathbb{Z}$.
Moreover, it is easy to check that $\widetilde{k}\circ i_{X\otimes _{\max
}B_{0}}=i_{X\otimes _{\min }B_{0}}\circ k_{X,B_{0}}$ and   $\widetilde{k}%
\circ i_{A\otimes _{\max }B_{0}}=i_{A\otimes _{\min }B_{0}}\circ k_{A,B_{0}}$
and then, by Remark \ref{nuc}, $\widetilde{k}=k_{0}$, a contradiction.
Therefore, if $A\times _{X}\mathbb{Z}$ is nuclear, then $A$ is nuclear.
\end{proof}

Another proof of the above proposition was given by Katsura (\cite[%
Proposition 7.8]{K}).

\textbf{Acknowledgements. }The author was partially supported by the grant
of the Romanian Ministry of Education, CNCS - UEFISCDI, project number
PN-II-ID-PCE-2012-4-0201.

\end{document}